\documentclass[a4paper,10pt,reqno]{amsart}

\usepackage{amsthm,amsmath,amssymb}
\usepackage{amsrefs}	
\usepackage{amsfonts}
\usepackage[T1]{fontenc}
\usepackage[all]{xy}
\usepackage{enumitem}
\usepackage{caption}
\usepackage{graphicx}
\usepackage[breaklinks]{hyperref} 
\usepackage{comment}
\usepackage{pgf,tikz}
\usepackage{ifthen}
\usepackage{color}
\usetikzlibrary{arrows.meta,decorations.pathmorphing,decorations.pathreplacing,calc,intersections}
\usepackage{pgfplots}
\usepgfplotslibrary{fillbetween}
%
\tikzset{every picture/.style={line width=0.55pt}}%
\tikzset{>={Classical TikZ Rightarrow[scale=1.66]}}
\tikzset{
  show curve controls/.style={
	postaction={
	  decoration={
		show path construction,
		curveto code={
		  \draw [blue,opacity=0.85, line width=0.2pt] 
			(\tikzinputsegmentfirst) -- (\tikzinputsegmentsupporta)
			(\tikzinputsegmentlast) -- (\tikzinputsegmentsupportb);
		  \fill [red, opacity=0.45] 
			(\tikzinputsegmentsupporta) circle [radius=1pt]
			(\tikzinputsegmentsupportb) circle [radius=1pt];
		}
	  },
	  decorate
}}}
%
%
%
\addtolength{\hoffset}{-1.5cm}
\addtolength{\textwidth}{3cm}
\addtolength{\voffset}{-1cm}
\addtolength{\footskip}{20pt}
\addtolength{\headheight}{1mm}
\addtolength{\textheight}{1.4cm}
\addtolength{\headsep}{0.5cm}

%
\definecolor{linkcolor}{rgb}{0,0,0.675}%
\hypersetup{colorlinks=true,linkcolor=linkcolor,citecolor=linkcolor,urlcolor=linkcolor,bookmarksdepth=2}%
\newtheorem{theorem}{Theorem}[section]
\newtheorem{lemma}[theorem]{Lemma}
\newtheorem{corollary}[theorem]{Corollary}
\newtheorem{proposition}[theorem]{Proposition}
\theoremstyle{definition}
\newtheorem{remark}[theorem]{Remark}
\newtheorem{definition}[theorem]{Definition}

\setenumerate{topsep=0pt plus 3pt,itemsep=0pt,leftmargin=25pt,listparindent=\parindent}
\setitemize{topsep=0pt plus 3pt,itemsep=0pt,leftmargin=25pt,listparindent=\parindent}
\setenumerate[1]{label=\normalfont(\alph{enumi})}
\setenumerate[2]{label=\normalfont(\roman{enumii}),leftmargin=10pt}
\setdescription{itemsep=0pt,leftmargin=25pt,listparindent=\parindent}

\setlength{\abovecaptionskip}{6pt minus 2pt}

\captionsetup[figure]{labelfont=bf, format=hang}
\captionsetup[table]{labelfont=bf, format=hang}

\makeatletter\@addtoreset{equation}{section}\makeatother

\renewcommand*{\P}{\mathbb P}
\newcommand*{\Spec}{\operatorname{Spec}}
\newcommand*{\image}{\operatorname{image}}
\newcommand*{\Hom}{\operatorname{Hom}}

\newcommand*{\Ext}{\operatorname{Ext}}

\newcommand*{\varTor}{\mathit{\mathcal T\hskip-2.5pt{}or}}
\newcommand*{\Div}{\operatorname{Div}}
\newcommand*{\Cl}{\operatorname{Cl}}
\newcommand*{\Pic}{\operatorname{Pic}}
\renewcommand*{\div}{\operatorname{Div}}
\newcommand*{\D}{\operatorname{D}}
\newcommand*{\Lotimes}{\mathbin{\stackrel{\mathbf{L}}{\otimes}}}
%
%

\newcommand*{\xto}[1]{\xrightarrow{\,#1\,}}
\newcommand\xmapsto[2][]{\mathrel{\mapstochar\xrightarrow[#1]{#2}}}
\makeatletter
\newcommand*{\da@rightarrow}{\mathchar"0\hexnumber@\symAMSa 4B }
\newcommand*{\da@leftarrow}{\mathchar"0\hexnumber@\symAMSa 4C }
\newcommand*{\xdashrightarrow}[2][]{%
  \mathrel{%
	\mathpalette{\da@xarrow{#1}{#2}{}\da@rightarrow{\,}{}}{}%
  }%
}
\newcommand{\xdashleftarrow}[2][]{%
  \mathrel{%
	\mathpalette{\da@xarrow{#1}{#2}\da@leftarrow{}{}{\,}}{}%
  }%
}
\newcommand*{\da@xarrow}[7]{%
  \sbox0{$\ifx#7\scriptstyle\scriptscriptstyle\else\scriptstyle\fi#5#1#6\m@th$}%
  \sbox2{$\ifx#7\scriptstyle\scriptscriptstyle\else\scriptstyle\fi#5#2#6\m@th$}%
  \sbox4{$#7\dabar@\m@th$}%
  \dimen@=\wd0 %
  \ifdim\wd2 >\dimen@
	\dimen@=\wd2 %
  \fi
  \count@=2 %
  \def\da@bars{\dabar@\dabar@}%
  \@whiledim\count@\wd4<\dimen@\do{%
	\advance\count@\@ne
	\expandafter\def\expandafter\da@bars\expandafter{%
	  \da@bars
	  \dabar@ 
	}%
  }%
  \mathrel{#3}%
  \mathrel{%
	\mathop{\da@bars}\limits
	\ifx\\#1\\%
	\else
	  _{\copy0}%
	\fi
	\ifx\\#2\\%
	\else
	  ^{\copy2}%
	\fi
  }%
  \mathrel{#4}%
}
\makeatother

\renewcommand*{\mod}{\ \operatorname{mod\,}}

\newcommand*{\Z}{\mathbb{Z}}
\newcommand*{\R}{\mathbb{R}}
\newcommand*{\Q}{\mathbb{Q}}
\newcommand*{\C}{\mathbb{C}}
\renewcommand*{\emptyset}{\varnothing}

\renewcommand*{\D}{\operatorname{D}^{\sf b}} 
\let\op\operatorname
\setlength{\intextsep}{5pt}
%

\title{Orthogonal exceptional collections from $\Q$-Gorenstein degeneration of surfaces}
\author{Cho, Yonghwa}

\address{Korea Institute for Advanced Study, 85 Hoegiro Dongdaemun-gu, Seoul 02455, Republic of Korea}

\email{yhcho88@kias.re.kr}

\begin{document}
	\setlength{\intextsep}{0.5\baselineskip plus 2pt minus 2pt}
	\begin{abstract}
		We consider a surface that admits a $\Q$-Gorenstein degeneration to a cyclic quotient singularity $\frac{1}{dn^2}(1,dna-1)$. Under several technical assumptions, we construct $d$ exceptional vector bundles of rank $n$ which are orthogonal to each other.
	\end{abstract}

	\maketitle
	\setcounter{tocdepth}{1}
	\tableofcontents
	\section{Introduction}
		Let $\mathcal X / (0 \in \Delta)$ be a one parameter deformation of a complex normal projective surface $\mathcal X_0$ with at worst quotient singularities. The deformation $\mathcal X / (0 \in \Delta)$ is said to be $\Q$-Gorenstein if $K_\mathcal X$ is $\Q$-Cartier. Locally, a quotient singularity admitting a $\Q$-Gorenstein smoothing is either a rational double point or a cyclic quotient singularity $\frac{1}{dn^2}(1,dna-1)$ where $d, n, a>0$ are integers with $n>a>0$ and $\op{gcd}(n,a)=1$. The singularity in the latter is called a singularity of class $T$. In the case $d=1$ we call $\frac{1}{n^2}(1,na-1)$ a \emph{Wahl singularity}.
		
		On a surface with a Wahl singularity and $p_g=q=0$, $\Q$-Gorenstein smoothing of the Wahl singularity gives rise to an exceptional vector bundle\,(a vector bundle $E$ such that $\bigoplus_p \Ext^p(E,E)=\C$) on the general fiber\,(Hacking\,\cite{Hacking:ExceptionalVectorBundle}). A natural question follows:
		\begin{quote}
			What can be said if one considers a $\Q$-Gorenstein smoothing of $\frac{1}{dn^2}(1,dna-1)$ with $d>1$?
		\end{quote}
		The article concerns an answer to this question.
		\begin{theorem}[see Theorem~\ref{thm: Main thm} for details]\label{thm: Main thm Summary}
			Let $X$ be a normal projective surface with $H^1(\mathcal O_X)=H^2(\mathcal O_X)=0$, let $(P\in X) \simeq \frac{1}{dn^2}(1,dna-1)$ be a singularity of class T, and let $\mathcal X / (0 \in \Delta)$ be a $\Q$-Gorenstein smoothing of $X$. Assume there exists a Weil divisor $D \in \Cl X$ such that $D$ is Cartier except at $P$ and the image of $D$ along $\Cl X \to H_2(X;\Z) \to H_1(L;\Z)$, where $L$ is the link of $(P \in X)$, generates $H_1(L;\Z)/n^2$. After a finite base change $(0 \in \Delta') \to (0 \in \Delta)$, there exist exceptional vector bundles $E_1,\ldots,E_d$ on the general fiber $S := \mathcal X_t$ such that $\op{rank} E_k=n$ and $\Ext_S^p(E_k,E_\ell)=0$ for each $p,\ k\neq \ell$.
		\end{theorem}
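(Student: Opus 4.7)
The plan is to generalize Hacking's construction from the Wahl case ($d=1$) to the class-$T$ case. The key observation is that the cyclic quotient $\frac{1}{dn^2}(1,dna-1)$ admits a partial $\Q$-Gorenstein smoothing to $d$ disjoint Wahl singularities $\frac{1}{n^2}(1,na-1)$, and correspondingly there are $d$ distinguished indecomposable rank-$n$ maximal Cohen--Macaulay modules $M_1,\dots,M_d$ over the local ring $\mathcal O_{X,P}$. In the class group $H_1(L;\Z) \cong \Z/(dn^2)$, the classes of these modules are $[M_k] = c + (k-1)n^2$ for a fixed residue $c$ that generates $\Z/n^2$; each $M_k$ is the MCM module attached to the $k$-th Wahl component of the partial smoothing.

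First, I would construct reflexive sheaves $\mathcal F_1,\dots,\mathcal F_d$ of rank $n$ on $X$ with stalks $(\mathcal F_k)_P \cong M_k$ and locally free on $X \setminus \{P\}$. The hypothesis that the image of $D$ generates $H_1(L;\Z)/n^2$ ensures that $[D]|_P$ coincides, up to multiples of $n^2$, with the residue $c$, so that (after modifying $D$ by divisors Cartier at $P$) each local class $[M_k]$ is realized by a global Weil divisor, and $\mathcal F_k$ arises as a reflexive hull built from that divisor. Next, I would deform $\mathcal F_k$ along $\mathcal X/\Delta$: by the $\Q$-Gorenstein nature of the family, together with $H^2(\mathcal O_X)=0$, the obstructions to deformation vanish after a finite base change $\Delta' \to \Delta$ (the base change absorbs the $d$-fold ambiguity in lifting the local MCM module to the total space, since the deformation space of $M_k$ maps as a $d$-sheeted cover onto the $\Q$-Gorenstein smoothing direction). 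The resulting reflexive sheaf $\widetilde{\mathcal F}_k$ on $\mathcal X'$ is locally free away from $P$, and its restriction $E_k := \widetilde{\mathcal F}_k|_S$ to the smooth general fiber $S$ is locally free of rank $n$.

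Exceptionality and orthogonality follow from an $\Ext$ computation on the central fiber combined with upper semicontinuity. Via the local-to-global spectral sequence, $\Ext^p_X(\mathcal F_k,\mathcal F_\ell)$ decomposes into $H^q(\varHom)$ and $H^q(\varExt^p)$ pieces; the hypotheses $H^1(\mathcal O_X)=H^2(\mathcal O_X)=0$ (which by semicontinuity pass to $S$) together with the local rigidity relations $\Ext^p_{\mathcal O_{X,P}}(M_k,M_k) = \C\cdot \delta_{p,0}$ and $\Ext^p_{\mathcal O_{X,P}}(M_k,M_\ell)=0$ for $k\neq\ell$ yield the desired vanishings on $X$, and semicontinuity gives $\dim \Ext^p_S(E_k,E_\ell) \le \dim \Ext^p_X(\mathcal F_k,\mathcal F_\ell)$. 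The main obstacle is the local $\Ext$ calculation between distinct $M_k$ and $M_\ell$: I expect to handle it by an explicit description of the MCM modules as pushforwards from the cyclic cover $\Spec \C[[x,y]] \to \Spec \mathcal O_{X,P}$, where the $\Ext$ groups decompose into eigenspaces under the $\Z/(dn^2)$-action and the eigenspace corresponding to the nonzero shift $(k-\ell)n^2$ vanishes. A secondary delicate point is checking that the global reflexive sheaves $\mathcal F_k$ really realize all $d$ local classes, which relies crucially on the generator hypothesis on $[D]$ modulo $n^2$.
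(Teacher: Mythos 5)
Your structural starting point is right --- the paper also splits $\frac{1}{dn^2}(1,dna-1)$ into $d$ Wahl points $\frac{1}{n^2}(1,na-1)$ (via a simultaneous M-resolution $\mathcal Z \to \mathcal X'$ obtained by blowing up $(x,z^n+t_1)$ after a base change, Proposition~\ref{prop: versal simul M-res} and Corollary~\ref{cor: simul M-res}), and the generator-mod-$n^2$ hypothesis is used exactly as you suspect, to produce divisors $D_k = D_0 + A_1 + \cdots + A_k$ on $\mathcal Z_0$ feeding each Wahl point into Hacking's construction. But the local picture you build on is wrong. Over a cyclic quotient surface singularity every indecomposable maximal Cohen--Macaulay module has rank one (they are the eigensheaves of $\pi_*\mathcal O_{\C^2}$, indexed by characters of the cyclic group), so there are no ``indecomposable rank-$n$ MCM modules $M_k$''; the actual central-fiber limit of each bundle is $\mathcal O(D_k)^{\oplus n}$, a sum of $n$ rank-one reflexives, and the bundles must be manufactured on a modified central fiber by gluing $\mathcal O(\tilde D_k)^{\oplus n}$ to a localized exceptional bundle $G_W$ on the exceptional component $W \subset \P(1,na-1,a,n)$ of a weighted blow-up. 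More damagingly, your claimed local rigidity $\Ext^0_{\mathcal O_{X,P}}(M_k,M_\ell)=0$ for $k\neq\ell$ is false: $\Hom_R(M,N)$ between nonzero reflexive modules over a normal local domain is a reflexive module of positive rank, hence never zero. Orthogonality is simply not visible on the central fiber in degree $0$.

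Consequently the local-to-global-plus-semicontinuity strategy cannot close. The paper instead proves $\chi(\mathcal F_{k,t},\mathcal F_{\ell,t})=0$ by Riemann--Roch (using that the classes differ by $n^2(A_k+\cdots+A_{\ell-1})$ with $(A_i)^2=-2/n^2$, $(A_i.A_{i+1})=1/n^2$), kills $\Hom$ on the \emph{general} fiber by showing both bundles are slope stable of the same slope with respect to a carefully chosen polarization $K_{\mathcal X'_t}+nm\mathcal H_t$ (Proposition~\ref{prop: Stability}), and kills $\Ext^2$ by Serre duality plus a degeneration argument that must explicitly confront the failure of flatness of $p_*(\mathcal F_k\otimes\mathcal F_\ell^\vee\otimes\omega)$ over the base --- exactly the point your appeal to ``semicontinuity gives $\dim\Ext^p_S \le \dim\Ext^p_X$'' glosses over. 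Your sketch also leaves the existence of the deformation itself to an unsubstantiated ``obstructions vanish after base change''; the required base changes in the paper come from concrete sources (the simultaneous M-resolution and Hacking's $t\mapsto t^a$), not from a $d$-sheeted cover of deformation spaces of MCM modules. So the proposal has the right skeleton but genuine gaps in both the construction and all three vanishing statements.
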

		The theorem is motivated from comparing degenerations of del Pezzo surfaces and three block collections\,(\cite{KarpovNogin:3block}). Suppose $X$ is a normal projective surface with quotient singularities admitting $\Q$-Gorenstein smoothing to $\P^2$. Then, $X$ is isomorphic to either $\P^2(a^2,b^2,c^2)$, where $(a,b,c)$ is a solution to Markov equation $a^2+b^2+c^2=3abc$, or a partial smoothing of one of these weighted projective planes\,(Manetti\,\cite{Manetti:NormalDegenerationOfPlane}, Hacking-Prokhorov\,\cite{HackingProkorov:DegenerationOfDelPezzo}). It can be shown that $\P(a^2,b^2,c^2)$ has only Wahl singularities. We may apply the construction method in \cite{Hacking:ExceptionalVectorBundle} to produce three exceptional vector bundles of respective ranks $a,b,c$ on $\P^2$, which form a full exceptional collection in $\D(\P^2)$. Conversely, every full exceptional collection in $\D(\P^2)$ arises in this way\,(cf. Gorodentsev-Rudakov\,\cite{GorodenstevRudakov:ExceptionalBundleOnPlane}, Rudakov\,\cite{Rudakov:MorkovNumberAndExceptional}).

		In general, it is difficult to find equations classifying full exceptional collections in del Pezzo surfaces. We narrow down our interests into \emph{three block collections}, namely, the full exceptional collections of the form
		\[
			\langle\, E^1_1,\,\ldots,\,E^1_{d_1},\ \ E^2_1,\,\ldots,\, E^2_{d_2},\ \ E^3_1,\,\ldots,\,E^3_{d_3} \, \rangle.
		\]
		The term ``block'' refers to each subcollection $\langle\, E_1^k,\ldots,E^k_{d_k}\, \rangle$. In each block, the objects are vector bundles of the same rank, and $\Ext^p(E^k_i, E^k_j)=0$ for each $p,k$ and $i\neq j$. In three block collections, one may associate Markov type equations as follows. Let $r_k$ be the rank of the object $E^k_i$. Then, the equation
		\begin{equation}\label{eq: Markov type equations}
			d_1 r_1^2 + d_2 r_2^2 + d_3 r_3^2 = \lambda r_1r_2r_3,\quad \lambda = \sqrt{K_X^2 d_1d_2d_3}
		\end{equation}
		holds. These equations play a central role in the classification problem of three block collections(Karpov-Nogin\,\cite{KarpovNogin:3block}). The equations (\ref{eq: Markov type equations}) also emerge in the classification problem of $\Q$-Gorenstein degenerations of del Pezzo surfaces. One of the main results of \cite{HackingProkorov:DegenerationOfDelPezzo} says the following. Let $X$ be a normal projective toric surface with Picard number one that admits a $\Q$-Gorenstein smoothing to a del Pezzo surface. Then, $X$ is one of the (fake) weighted projective spaces having three singular points $\frac{1}{d_ir_i^2}(1,d_ir_ia_i-1)$. Comparing with the result of \cite{Hacking:ExceptionalVectorBundle}, it is natural to expect that each singularity contributes each block in the three block collection.
	\subsection*{Notations and conventions}{\ }\
	\begin{itemize}
		\item We always work over the field of complex numbers.
		\item For a complex analytic space $\mathcal X$ and a point $P \in \mathcal X$, $(P \in \mathcal X)$ denotes the germ of analytic neighborhoods at $P$. The readers who prefer algebraic language may replace $\mathcal X$ by an algebraic $\C$-scheme, $(P \in \mathcal X)$ by a germ of \'etale neighborhoods, and $(0 \in \Delta)$\,($\Delta$ a complex disk) by $(0 \in T)$\,($T$ an algebraic curve smooth at $0$).
		\item Let $\mu_r$ be the cyclic group generated by the $r$th root of unity $\zeta_r = \exp( 2\pi\sqrt{-1} / r)$. For an integer $a_1,\ldots,a_m$ with $\op{gcd}(r,m)=1$, we define the action
	\[
		\mu_r \times \C^m \to \C^m,\quad \zeta_r \cdot (z_1,\,\ldots,\, z_m) = (\zeta_r^{a_1} z_1,\,\ldots, \zeta_r^{a_m} z_m ).
	\]
	The singularity $(0 \in \C^m / \mu_r)$ is denoted by $\bigl(0 \in \C^m / \frac{1}{r}(a_1,\ldots,a_m)\bigr)$, or more simply, $\frac{1}{r}(a_1,\ldots,a_m)$.
		\item The Weil divisor class group is denoted by $\Cl X$, and $\mathcal O_X(D)$ is the reflexive sheaf of rank one associated to $D \in \Cl X$.
		\item Given a family $\mathcal X / (0 \in \Delta)$ and a sheaf $\mathcal E$ over $\mathcal X$, the restriction $\mathcal E\big\vert_{\mathcal X_t}$ is simply denoted by $\mathcal E_t$.
		\item For a smooth projective variety $X$, $\D(X) = \D(\mathop{\bf Coh} X)$ is the bounded derived category of coherent sheaves on $X$. Given a set of objects $A \subset \D(X)$, $\langle A \rangle$ denotes the smallest $\C$-linear full triangulated subcategory containing $A$.
	\end{itemize}
	\section{Exceptional collections}\label{sec:Exceptional collection}
	Let $V$ be a smooth projective variety, and let $\D(V)$ be the (bounded) derived category of coherent sheaves on $V$. We are interested in the decomposition of $\D(V)$ into smaller pieces. One natural attempts is to consider \emph{orthogonal decompositions}, namely, the decomposition $\D(V) = \langle \mathcal T_1, \mathcal T_2 \rangle$ such that 
	\[
		\Hom_{\D(V)}(T_1,T_2) = \Hom_{\D(V)}(T_2,T_1) = 0,\quad \text{for all}\ T_1 \in \mathcal T_1\ \text{and}\ T_2 \in \mathcal T_2.
	\]
	However, there is no orthogonal decomposition unless $V$ is disconnected\,(\cite[Proposition~3.10]{Huybrechts:FourierMukai}). Discarding the condition $\Hom_{\D(V)}(T_1,T_2)=0$, one obtains the notion of \emph{semiorthogonal decomposition}, which is more interesting than the orthogonal decompositions.
	\begin{definition}
		The full triangulated subcategories $\mathcal T_1,\mathcal T_2 \subset \D(V)$ form a \emph{semiorthogonal decomposition} if the following holds:
		\begin{enumerate}
			\item $\D(V) = \langle \, \mathcal T_1,\, \mathcal T_2 \, \rangle$;
			\item $\Hom_{\D(V)} ( T_2, T_1) = 0$ for any $T_1 \in \mathcal T_1$, $T_2 \in \mathcal T_2$.
		\end{enumerate}
		The category $\mathcal T_2$ is said to be \emph{left orthogonal} to $\mathcal T_1$, and is denoted by ${}^\perp \mathcal T_1$. Similarly, $\mathcal T_1 = \mathcal T_2^\perp$ is right orthogonal to $\mathcal T_2$.
	\end{definition}
	Once the notion of decomposition is established, one needs to find the natural candidates of the components. The simplest possible component would be the derived category $\D(\Spec \C)$ of a point. The structure of $\D(\Spec \C)$ is rather simple; indeed, every object is isomorphic to a complex of the form
	\[
		\bigoplus_p^{\text{finite}} \C^{r_p}[-p] = \bigl( \ldots \to \C^{r_{p-1}} \xto{0} \C^{r_p} \xto{0} \C^{r_{p+1}} \to \ldots \bigr).
	\]
	Assume that $\D(V)$ contains $\D(\Spec \C)$ as a semiorthogonal component. Then, the embedding $\Phi \colon \D(\Spec \C) \hookrightarrow \D(V)$ is determined by the image of the complex $\underline \C = (\ldots \to 0 \to \C \to 0 \to \ldots)$ supported at degree $0$. The object $E := \Phi(\underline \C)$ is called an \emph{exceptional object}. We may reformulate the definition intrinsically as follows.
	\begin{definition}
		An object $E \in \D(V)$ is \emph{exceptional} if
		\[
			\Hom_{\D(V)}(E,E[p]) = \left\{
				\begin{array}{ll}
					\C & p=0 \\
					0 & p\neq 0
				\end{array}
			\right.{}.
		\]
	\end{definition}
	Given an exceptional object $E_1$, one may define the semiorthogonal decompositions $\D(V) = \langle E_1, \mathcal A \rangle$, where $\mathcal A = \langle \{ A \in \mathcal \D(V) : \Hom(A, E_1[p])=0,\ \forall p \} \rangle$ is the category left orthogonal to $\langle E_1 \rangle$. In some cases, it is possible to find another semiorthogonal complement $\D(\Spec \C) \xto\sim \langle E_2 \rangle \subset \mathcal A$. Repeating this procedure, we have an ordered set of exceptional objects $E_1,\ldots,E_n \in \D(V)$ satisfying the semiorthogonality condition. This suggests the following definition.
	\begin{definition}\label{def: exceptional and orthogonal collection}\ 
		\begin{enumerate}
			\item An ordered set of exceptional objects $E_1,\ldots,E_n \in \D(V)$ is called an \emph{exceptional collection} if
			\[
				\Hom_{\D(V)} (E_{i+k},E_i[p])=0,\ \text{for every $p$ and $k > 0$}.
			\]
			An exceptional collection $E_1,\ldots,E_n \in \D(V)$ is said to be \emph{full} if
			\[
				\D(V) = \langle E_1,\ldots,E_n \rangle.
			\]
			\item An \emph{orthogonal collection} is an ordered set $(E_1,\ldots,E_n)$ of exceptional objects that is completely orthogonal, namely, 
			\[
				\Hom_{\D(V)} (E_{i+k},E_i[p])=0,\ \text{for each $p$ and $k \neq 0$}.
			\]
		\end{enumerate}
	\end{definition}
	
	\section{Singularities of class T and their $\Q$-Gorenstein deformations} 
	Let $X \subset \P^N$ be a complex surface, and let $P \in X$ be a normal singularity. We may take a sufficiently small ball $B_\P \subset \P^N $ at $P$. Let $B = B_\P \cap X$. The boundary $L$ of $B$ is called the \emph{link} of the singularity $(P \in X)$. The link is known to capture various topological nature of $(P \in X)$
	
	\begin{theorem}[{Mumford\,\cite{Mumford: TopologyOfNormalSurfaceSingularity}}]\label{thm: Mumford TopologyNSS}
		Let $\pi \colon Y \to (P \in X)$ be a resolution of $(P \in X)$. Assume that the exceptional divisor $E_1\cup \ldots\cup E_r$ of $\pi$ is simple normal crossing. Let $\alpha_i$ be a loop around $E_i$ oriented by $\displaystyle\int_{\alpha_i} f_i^{-1} df_i = 2\pi \sqrt{-1}$, where $(f_i=0)$ is a local defining equation of $E_i$. Then, the $H_1(L;\Z)$ has the group presentation:
		\[
			H_1(L;\Z) = \bigl\langle \alpha_1,\ldots,\alpha_r : \sum_{j=1}^r (E_i.E_j) \alpha_j = 0,\ \forall i=1,\ldots,r\bigr\rangle.
		\]
	\end{theorem}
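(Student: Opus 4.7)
The plan is to identify the link $L$ with the boundary of a smooth regular neighborhood of the exceptional divisor $E = E_1 \cup \ldots \cup E_r$ inside $Y$, and to extract the claimed presentation of $H_1(L;\Z)$ from the long exact sequence of that pair together with Lefschetz duality. After shrinking $B_\P$ if necessary, $N := \pi^{-1}(B)$ is a compact oriented $4$-manifold with smooth boundary diffeomorphic to $L$, and because $\pi$ contracts $E$ to $P$ it deformation retracts onto $E$; in particular $H_*(N;\Z) \cong H_*(E;\Z)$. One may picture $N$ as the plumbing obtained by gluing disk-bundle neighborhoods of the $E_i$ at the nodes of the SNC divisor, which makes the smoothness of $\partial N$ transparent and supplies the complex orientation inherited from $Y$.

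Given this setup, Lefschetz duality yields $H_k(N,L;\Z) \cong H^{4-k}(N;\Z) \cong H^{4-k}(E;\Z)$. Since $E$ is a two-dimensional CW complex, $H^3(E;\Z)=0$, and the long exact sequence of the pair collapses to
\[
    H_2(E;\Z) \xto{\varphi} H^2(E;\Z) \xto{\partial} H_1(L;\Z) \to H_1(E;\Z) \to 0.
\]
In the situations relevant to this paper the components $E_i$ are rational curves meeting along a tree, so $H_1(E;\Z)=0$ and it suffices to identify (i) the map $\varphi$ with the intersection matrix, under the bases $\{[E_i]\}$ of $H_2(E;\Z)$ and $\{e_j^\vee\}$ of $H^2(E;\Z)$ dual to $\{[E_j]\}$, and (ii) the connecting homomorphism $\partial$ as $e_j^\vee \mapsto \alpha_j$.

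Identification (i) is immediate: through the duality isomorphism $H_2(N,L;\Z) \cong H^2(N;\Z)$ given by capping with the fundamental class $[N,\partial N]$, the canonical map $H_2(N;\Z) \to H_2(N,L;\Z)$ becomes the Poincar\'e dual of the intersection pairing on $Y$, sending $[E_i]$ to $\sum_j (E_i \cdot E_j)\, e_j^\vee$. For (ii) I would take $\Delta_j \subset Y$ to be a small holomorphic disk meeting $E_j$ transversally at a single smooth point of $E$ and disjoint from every other $E_k$. Its relative class $[\Delta_j] \in H_2(N,L;\Z)$ corresponds to $e_j^\vee$ under Lefschetz duality, since $[\Delta_j] \cdot [E_k] = \delta_{jk}$, and by definition $\partial[\Delta_j] = [\partial \Delta_j]$ is precisely the meridian $\alpha_j$ normalised by the residue condition $\int_{\partial \Delta_j} f_j^{-1} df_j = 2\pi\sqrt{-1}$. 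The main technical obstacle is checking that the analytically defined neighborhood $N$ really has smooth boundary diffeomorphic to $L$; this is standard but needs some care at the SNC crossings, and is most cleanly handled by reducing to the local plumbing model in a bidisk at each node.
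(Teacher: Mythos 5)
The paper does not prove this statement---it is quoted directly from Mumford---so there is no internal proof to compare against; your argument is the standard Poincar\'e--Lefschetz duality derivation and it is correct. Taking $N=\pi^{-1}(B)$ is the right move: since $\pi$ is an isomorphism off $E$, the boundary $\partial N=\pi^{-1}(L)$ is literally biholomorphic to $L$, so the ``technical obstacle'' you worry about at the end (smoothness of $\partial N$ at the SNC crossings) does not actually arise; it would only matter if you built $N$ by plumbing disk bundles rather than pulling back the Milnor ball. The identifications of $\varphi$ with the intersection matrix and of $\partial$ with $e_j^\vee\mapsto\alpha_j$ via transverse disks are exactly as in Mumford's paper. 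One point worth making explicit: the presentation as literally stated is an abelian group on $r$ generators with $r$ relations, i.e.\ $\operatorname{coker}A$, and your exact sequence shows this equals $H_1(L;\Z)$ only when $H_1(E;\Z)=0$. You correctly flag this and it holds in every application in the paper (chains of rational curves over cyclic quotient singularities), but strictly speaking it is a hypothesis the theorem as stated omits rather than a consequence of simple normal crossings alone.
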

	The Mayer-Vietoris sequence associated to $X = (X \setminus B) \cup B$ defines the map $H_2(X;\Z) \to H_1(L;\Z)$. Composition with the cycle class map $\op{Cl}X \to H_2(X;\Z)$ defines
	\begin{equation}\label{eq: Div Cycle map}
		\gamma \colon \op{Cl}X \to H_1(L;\Z),\quad D \mapsto \sum_j ( D' . E_j) \alpha_j.
	\end{equation}
	where $D' \in \Cl Y$ is a divisor such that $\pi_* D' = D$. Thanks to Theorem~\ref{thm: Mumford TopologyNSS}, the right hand side of (\ref{eq: Div Cycle map}) does not depend on the choice of $D'$. For instance, if one replaces $D'$ by $D'+E_1$, then
	\begin{align*}
		\sum_j ( D' + E_1 \mathbin. E_j) \alpha_j &= \sum_j ( D' . E_j) \alpha_j + \sum_j (E_1.E_j) \alpha_j \\
		&= \sum_j (D' . E_j)\alpha_j.
	\end{align*}
	The above observation leads to the following proposition.
	\begin{proposition}\label{prop: divisors having same description in the link}
		Let $D' \in \Cl Y$ and $D := \pi_* D' \in \Cl X$. There is a one-to-one correspondence
		\[
			\bigl\{ D' + \sum_j a_j E_j \in \Cl Y \mathrel: a_1,\ldots,a_r \in \Z \bigr\} \leftrightarrow \bigl\{ (k_1,\ldots,k_r) \in \Z^r : \gamma(D) = \sum_j k_j \alpha_j \bigr\}
		\]
		given by $D'' \mapsto \bigl( (D'' . E_1),\, \ldots, \, (D'' . E_r) \bigr)$.
	\end{proposition}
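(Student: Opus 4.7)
The plan is to parametrize the left-hand side by $\Z^r$ via the coefficients $(a_1,\ldots,a_r)$ (which is automatic since the $E_j$ are distinct components of the exceptional divisor), and then recognize the intersection-product map as an affine map from $\Z^r$ to $\Z^r$ given by the matrix $M = (E_i . E_j)$. Concretely, for $D'' = D' + \sum_j a_j E_j$ one has
\[
	(D'' . E_i) \;=\; (D' . E_i) + \sum_{j} a_j (E_j . E_i),
\]
so the map in question becomes $a \mapsto (D' . E_i)_i + M\, a$.

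First I would verify well-definedness, i.e.\ that the image lies in the right-hand side. Multiplying each component by $\alpha_i$ and summing,
\[
	\sum_i (D''.E_i)\,\alpha_i \;=\; \sum_i(D'.E_i)\,\alpha_i \;+\; \sum_j a_j \Bigl(\sum_i (E_j.E_i)\,\alpha_i\Bigr),
\]
and the inner sums vanish in $H_1(L;\Z)$ by Theorem~\ref{thm: Mumford TopologyNSS}; the remaining term is $\gamma(D)$ by the definition \eqref{eq: Div Cycle map}. This is the same calculation already carried out above the proposition for the special case $D' \mapsto D'+E_1$.

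For injectivity I would invoke the fact that the intersection matrix $M$ on the exceptional locus of a resolution is negative definite, hence non-degenerate over $\Z$; so $Ma = Mb$ forces $a=b$. For surjectivity, suppose $(k_1,\ldots,k_r)$ satisfies $\sum_i k_i\alpha_i = \gamma(D) = \sum_i(D'.E_i)\alpha_i$; then the vector $c := (k_i - (D'.E_i))_i$ lies in the kernel of the quotient map $\Z^r \twoheadrightarrow H_1(L;\Z)$. By Theorem~\ref{thm: Mumford TopologyNSS}, that presentation identifies the kernel precisely with the subgroup generated by the rows (equivalently columns) of $M$; i.e.\ $\ker = M(\Z^r)$. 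Hence there exists an integer vector $a \in \Z^r$ with $Ma = c$, and the corresponding $D'' = D' + \sum_j a_j E_j$ maps to $(k_1,\ldots,k_r)$.

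The only point requiring care — and the step I expect to state explicitly — is the identification $\ker(\Z^r \to H_1(L;\Z)) = M(\Z^r)$, which must be read off from Mumford's presentation (symmetry of $M$ ensures rows and columns generate the same subgroup). Everything else is bookkeeping with the formula $(D''.E_i) = (D'.E_i) + (Ma)_i$.
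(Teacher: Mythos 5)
Your proof is correct and follows essentially the same route as the paper: reduce everything to the affine map $a \mapsto \mathbf{d'} + Aa$ with $A=(E_i.E_j)$ and use negative definiteness. The only difference is that you make explicit the surjectivity step (the identification $\ker(\Z^r\to H_1(L;\Z))=A(\Z^r)$ coming from Mumford's presentation, using symmetry of $A$), which the paper subsumes into the single phrase ``bijectivity follows since $A$ is negative definite''; your added care here is harmless and arguably clarifying.
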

	\begin{proof}
		Let $D'' = D' + \sum_j a_j E_j$ and define the column vectors $\mathbf d''$, $\mathbf d'$, $\mathbf k$, $\mathbf a$ having entries as $(D''.E_i)$, $(D'.E_i)$, $k_i$, $a_i$, respectively. Let $A = \bigl( (E_i.E_j) \bigr)_{i,j}$ be the intersection matrix. Then, we have
		\[
			\mathbf{d''} = \mathbf{d'} + A \mathbf{a}.
		\]
		Now, bijectivity follows since $A$ is negative definite\,(see \cite[Corollary~2.7]{Badescu:Surfaces}).
	\end{proof}
	
	For a cyclic quotient singularity $\frac1m(1,q)$, the exceptional locus of the minimal resolution forms a chain of smooth rational curves. Let
	\begin{center}\vskip+0.33\baselineskip
		\begin{tikzpicture}
			\pgfmathsetmacro{\xunit}{2}
			\foreach \n/\s in {1/1,2/2,3/3,4/r}{
				\coordinate (A\n) at (\xunit*\n,0);
				\ifthenelse{\n=3}{\draw node at (A\n) {$\cdots$};}{
					\draw[fill=black] (A\n) circle (2pt);
					\draw node[anchor=north, yshift=-6pt] at (A\n) {$E_\s$};
				}
				\pgfmathtruncatemacro{\m}{\n-1}
				\ifthenelse{\n>1}{
					\draw[-] ([xshift=10pt] A\m) -- ([xshift=-10pt] A\n);
				}{}
			}
		\end{tikzpicture}
	\end{center}
	be the dual intersection graph of the exceptional curves, and let $b_i = -(E_i.E_i) > 0$. The numbers $m,q$ are recovered from the Hirzebruch-Jung continued fraction
	\[
		\frac{m}{q} = [b_1,\ldots,b_r] := b_1 - \frac{1}{b_2 - \frac{1}{\ldots - \frac{1}{b_r}		}}.
	\]
	\begin{proposition}\label{prop: Link algebra}
		Let $A(b_1,\ldots,b_r)$ be the matrix
		\[
			\left[
				\begin{array}{cccccc}
					b_1 & -1 & 0  & \ldots & 0 & 0 \\
					-1 & b_2 & -1 &\ldots & 0 & 0 \\
					\multicolumn{3}{c}{\vdots} & \ddots & \multicolumn{2}{c}{\vdots} \\
					0 & 0 & 0 &\ldots & b_{r-1} & -1 \\
					0 & 0 & 0  &\ldots & -1 & b_r \\
				\end{array}
			\right],
		\]
		and let $G(b_1,\ldots,b_r)$ be the group generated by the symbols $\alpha_1,\ldots,\alpha_r$, subject to the relations
		\[
			A(b_1,\ldots,b_r) \left[
				\begin{array}{c}
					\alpha_1 \\ \vdots \\ \alpha_r
				\end{array}
			\right]=0.
		\]
		Then, $\alpha_k = \det A(b_1,\ldots,b_{k-1}) \cdot \alpha_1$ for each $k=2,\ldots,r$. Furthermore,
		\[
			m = \det A(b_1,\ldots,b_r),\quad q = \det A(b_2,\ldots,b_r).
		\]
	\end{proposition}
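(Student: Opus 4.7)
The plan is to exploit the tridiagonal structure of $A(b_1,\ldots,b_r)$ to reduce everything to a single three-term linear recursion. Reading off the relations $A \vec\alpha = 0$ row by row, the first row gives $\alpha_2 = b_1 \alpha_1$; each interior row $k$ (for $2 \leq k \leq r-1$) gives $\alpha_{k+1} = b_k \alpha_k - \alpha_{k-1}$; and the last row gives the boundary relation $b_r \alpha_r = \alpha_{r-1}$. In particular, every $\alpha_k$ is an integer multiple of $\alpha_1$, so $G(b_1,\ldots,b_r)$ is cyclic generated by $\alpha_1$.

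To identify the multipliers, I would set $D_k := \det A(b_1,\ldots,b_k)$ with the convention $D_0 := 1$. Cofactor expansion along the last row yields the same recursion $D_k = b_k D_{k-1} - D_{k-2}$ with $D_0 = 1$ and $D_1 = b_1$. A straightforward induction comparing this with the recursion for $\alpha_k$ proves $\alpha_k = D_{k-1} \alpha_1$, which is the first claim. Substituting $k = r$ into the boundary relation $b_r\alpha_r = \alpha_{r-1}$ produces $(b_r D_{r-1} - D_{r-2})\alpha_1 = D_r \alpha_1 = 0$, so $G$ is cyclic of order dividing $D_r$.

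For the second claim, I would cofactor-expand $A(b_1,\ldots,b_r)$ along its first row to obtain $\det A(b_1,\ldots,b_r) = b_1 \det A(b_2,\ldots,b_r) - \det A(b_3,\ldots,b_r)$; dividing by $\det A(b_2,\ldots,b_r)$ and iterating yields
\[
	\frac{\det A(b_1,\ldots,b_r)}{\det A(b_2,\ldots,b_r)} = [b_1,\ldots,b_r] = \frac{m}{q}.
\]
The delicate point — which I would flag as the main obstacle — is to conclude that $D_r$ literally equals $m$ rather than merely a multiple of it. For this, the same recursion $D_k = b_k D_{k-1} - D_{k-2}$ delivers a Euclidean-style descent $\gcd(D_k, D_{k-1}) = \gcd(D_{k-1}, D_{k-2}) = \cdots = \gcd(D_1, D_0) = 1$, and the analogous argument applies to the sequence of determinants starting from $b_2$. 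Hence the fraction above is already in lowest terms, so $D_r = m$ and $\det A(b_2,\ldots,b_r) = q$ as claimed.
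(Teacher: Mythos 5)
Your proof is correct, and the first half (reading off the three-term recursion from the rows of $A$, matching it with the continuant recursion $D_k = b_k D_{k-1} - D_{k-2}$ for the leading principal minors, and inducting to get $\alpha_k = D_{k-1}\alpha_1$) coincides with the paper's argument. The second half takes a genuinely different and arguably cleaner route. The paper identifies $\det A(b_1,\ldots,b_k)$ with the numerators of the \emph{reversed} continued fractions $[b_k,\ldots,b_1]$ via the shared recursion, obtains $m = \det A(b_1,\ldots,b_r)$ and $q' = \det A(b_1,\ldots,b_{r-1})$ with $qq'\equiv 1 \pmod m$, and then reruns the argument on $[b_1,\ldots,b_k]$ to extract $q$; your version instead expands along the first row to exhibit $\det A(b_1,\ldots,b_r)/\det A(b_2,\ldots,b_r)$ directly as the continued fraction $[b_1,\ldots,b_r]$ and then checks the fraction is reduced, which avoids invoking the inverse-modulo-$m$ relation between $q$ and $q'$ and is more self-contained. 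One small imprecision to tighten: the descent you write out, $\gcd(D_r,D_{r-1}) = \cdots = \gcd(D_1,D_0)=1$, concerns the pair $\bigl(\det A(b_1,\ldots,b_r),\, \det A(b_1,\ldots,b_{r-1})\bigr)$, which is not the numerator--denominator pair of your fraction. The descent you actually need runs over the \emph{trailing} minors, using the first-row expansion you already wrote: $\gcd\bigl(\det A(b_j,\ldots,b_r),\det A(b_{j+1},\ldots,b_r)\bigr) = \gcd\bigl(\det A(b_{j+1},\ldots,b_r),\det A(b_{j+2},\ldots,b_r)\bigr)$, terminating at $\gcd(b_r,1)=1$. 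This is clearly what you intend by ``the analogous argument,'' and with that substitution (plus the observation that all these determinants are positive since $b_i\geq 2$, so the two reduced representations of $m/q$ must agree termwise) the proof is complete.
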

	\begin{proof}
		We have a recurrence formula
		\[
			A(b_1,\ldots,b_k) = b_k A(b_1,\ldots,b_{k-1}) - A(b_1,\ldots,b_{k-2}),\quad k \geq 2,
		\]
		when we set $\det A(\emptyset) = 1$ conventionally. Let $n_k \in \Z_{>0}$ be the smallest positive integer that fits into the formula $\alpha_k = n_k \cdot \alpha_1$. The relations of $G(b_1,\ldots,b_r)$ reads $n_k = b_{k-1} n_{k-1} - n_{k-2}$. Since $n_1 = 1 = A(\emptyset)$ and $n_2 = b_1 = A(b_1)$, we get $n_k = A(b_1,\ldots,b_{k-1}) $. To prove the remaining identities, let $\alpha_k,\beta_k>0$ be the relatively prime integers such that
		\[
			\frac{\alpha_k}{\beta_k} = [b_k,\ldots,b_1].
		\]
		Then, $\beta_{k+1} = \alpha_k$ and $\alpha_{k+1} = b_{k+1}\alpha_k - \alpha_{k-1}$. It follows that $\alpha_k = n_{k+1} = \det A(b_1,\ldots,b_k)$. This shows $m = \det A(b_1,\ldots,b_r)$ and $q' = \det A(b_1,\ldots, b_{r-1})$, where $qq' \equiv 1 \mod m$. Applying the same argument to $[b_1,\ldots,b_k] = \gamma_k/\delta_k$, one finds $q = \det A(b_2,\ldots,b_r)$.
	\end{proof}
	Our main interest is the cyclic quotient singularities having special forms, namely, when 
	\begin{equation}\label{eq: T Sing}
		(m,q) = (dn^2,\, dna-1)\ \text{where}\ d,n,a \in \Z_{>0}\ \text{with}\ \op{gcd}(n,a)=1.
	\end{equation}
	These are the cyclic quotient singularities which admit $\Q$-Gorenstein smoothing.
	\begin{definition}\label{def: QGorDef}
		Let $X$ be a normal surface such that $K_X$ is $\Q$-Cartier, and let $\mathcal X / (0 \in \Delta)$ be a deformation of $X$. The deformation $\mathcal X / (0 \in \Delta)$ is said to be \emph{$\Q$-Gorenstein} if it is locally equivariant with respect to the deformation of an index one cover. More precisely, for each singular point $P \in X$, there exists a neighborhood $P \in \mathcal V \subset \mathcal X$ which satisfies the following: the index one cover $Z \to \mathcal V_0 := \mathcal V \cap X$ admits a deformation $\mathcal Z / (0 \in \Delta)$ to which the action $Z \to \mathcal V_0$ extends and the corresponding quotient is $\mathcal Z / (0 \in \Delta) \to \mathcal V / (0 \in \Delta)$.
	\end{definition}
	Let $u,v \in \C^2 / \frac{1}{dn^2}(1,dna-1)$ be the orbifold coordinates. After the base change $x=u^{dn}$, $y=v^{dn}$, $z = uv$, we get
	\[
		(0 \in (xy=z^{dn})) \subset \C^3_{x,y,z} \Big/ \frac{1}{n}(1,-1,a)
	\]
	The index one cover is simply obtained by replacing the ambient space by $\C^3$, hence is an $A_{dn-1}$ singularity. The versal deformation of it is
	\[
		(0 \in (xy = z^{dn} + c_{dn-2} z^{dn-2} + \ldots + c_1 z + c_0 ) \subset \C_{x,y,z}^3 \times \Delta^{dn-1}_{\underline c}.
	\]
	Thus, the equivariant deformation should be given by gathering the $z^n$ term and its powers:
	\begin{equation}\label{eq: versal Q-Gor}
		\mathcal X^{\sf ver} := (xy = z^{dn} + t_{d-1} z^{(d-1)n} + \ldots + t_1 z^n + t_0 ) \subset \C_{x,y,z}^3 \Big/ \frac1n(1,-1,a) \times \Delta^{d}_{\underline t}
	\end{equation}
	The deformation $(0 \in \mathcal X^{\sf ver}) / (0 \in \Delta^d_{\underline t})$ is called the \emph{versal $\Q$-Gorenstein deformation} of $\frac{1}{dn^2}(1,dna-1)$. Indeed, every $\Q$-Gorenstein deformation of $\frac{1}{dn^2}(1,dna-1)$ is a pullback of the versal $\Q$-Gorenstein deformation.
	
	Let $(P \in \mathcal X) / (0 \in \Delta)$ be a one parameter $\Q$-Gorenstein smoothing of $(P \in X) \simeq \frac{1}{dn^2}(1,dna-1)$. Consider a small 3-dimensional complex ball $\mathcal B \subset \mathcal X$ at $P$. Then the slice over the general fiber, namely, $\mathcal B_t := \mathcal B \cap \mathcal X_t$ for general $t \in \Delta$, is called the \emph{Milnor fiber} of the smoothing. Since the complement of the Milnor fiber is homeomorphic to $X^\circ := X \setminus \mathcal B_0$, one can construct the underlying topological space of $\mathcal X_t$ by a topological surgery illustrated in Figure~\ref{fig: Topological Surgery}.
	\begin{figure}[h!]
	\centering
	\pgfdeclarelayer{auxLayer}
	\pgfsetlayers{auxLayer,main}
	\begin{tikzpicture}[scale=1]
		\draw node at (0,3){};
		\newcommand{\PuncturedFiber}{
			\draw (90+45:40pt and 20pt) arc (90+45:360+90-45:40pt and 20pt);
			\draw[densely dotted] (0,0) ellipse [x radius=40pt, y radius=4pt];		
		}
		\PuncturedFiber
		\coordinate (CoordTempA) at (90+45:40pt and 20pt);
		\coordinate (CoordTempB) at (360+90-45:40pt and 20pt);
		\newdimen\dimTempA
		\newdimen\dimTempB
		\newdimen\dimHeight
		\pgfextractx{\dimTempA}{\pgfpointanchor{CoordTempA}{center}}
		\pgfextractx{\dimTempB}{\pgfpointanchor{CoordTempB}{center}}
		\pgfextracty{\dimHeight}{\pgfpointanchor{CoordTempB}{center}}
		\pgfmathparse{1/2*(\dimTempB-\dimTempA)}
		\pgfmathsetmacro{\xradiusLink}{\pgfmathresult}
		\newcommand{\Link}[1]{%
			\draw[red] #1 arc (180:360: \xradiusLink pt and 3pt );
			\draw[red] #1 arc (90+45:90-45: 40 pt and 10pt );
		}%
		\Link{(\dimTempA, \dimHeight)}
		\newcommand*{\CentralBall}{
			\Link{(-\xradiusLink pt,\dimHeight)}
			\coordinate (SingularP) at ($(0, 0.35) + (0,\dimHeight)$);
			\draw (-\xradiusLink pt,\dimHeight) .. controls ++(75:0.25) and ++(90+45:0.5) .. (SingularP)
			.. controls ++(90-45:0.5) and ++(180-75:0.25) .. (\xradiusLink pt, \dimHeight);
			\draw[fill=black] (SingularP) circle (1pt);
		}
		\newcommand*{\MilnorFiber}{
			\Link{(-\xradiusLink pt,\dimHeight)}
			\coordinate (SingularP) at ($(0, 0.35) + (0,\dimHeight)$);
			\draw (-\xradiusLink pt,\dimHeight) .. controls ++ (60:0.75) and ++(180-60:0.75) .. (\xradiusLink pt, \dimHeight);
			\draw[name path= Hole_1] ($(0,1.66*\dimHeight)+(135: 8pt and 2pt)$) arc (135:360+45: 8pt and 2pt);
			\draw[name path= Hole_2, opacity=0] (0,1.5*\dimHeight) ellipse [x radius = 6pt, y radius=2pt];
			\draw[name path=Hole, intersection segments={of=Hole_1 and Hole_2, sequence={R2} }];
		}
		\begin{scope}[shift={(-6,0)}]
			\PuncturedFiber
			\CentralBall
		\end{scope}
		\begin{scope}[shift={(6,0)}]
			\PuncturedFiber
			\MilnorFiber
		\end{scope}
		\begin{scope}[shift={(-2,1.5)}, rotate=45]
			\CentralBall
		\end{scope}
		\begin{scope}[shift={(2,1.5)}, rotate=-45]
			\MilnorFiber
		\end{scope}
		\draw[->] (-0.8,0.8) -- ++(180-30:1.6) node [above, midway,rotate=-30] {\tiny remove $\mathcal B_0$};
		\draw[<-] (0.8,0.8) -- ++(30:1.6) node [above, midway,rotate=30] {\tiny replace by $\mathcal B_t$};
		\draw[->, decorate, decoration={snake, amplitude=0.45pt, segment length=5pt}] (-4,0) -- (-2,0);
		\draw[->, decorate, decoration={snake, amplitude=0.45pt, segment length=5pt}] (2,0) -- (4,0);
		\draw node[anchor=north] at (-6,-1) {$ X = X^\circ \underset{L}{\cup} \mathcal B_0$};
		\draw node[anchor=north] at (0,-1) {$ X^\circ,\ L = \partial X^\circ$};
		\draw node[anchor=north] at (6,-1) {$ \mathcal X_t = X^\circ \underset{L}{\cup} \mathcal B_t$};
		\draw node[red, anchor=south] at (0,.55) {$L$};
	\end{tikzpicture}
		\caption{Topological surgery to construct the general fiber}\label{fig: Topological Surgery}
	\end{figure}	
	The ball $\mathcal B_0$ is contractible, as it is the image of a contractible ball along $\C^2 \to \C^2/\frac{1}{dn^2}(1,dna-1)$. Thus, $H_2(X,\mathcal B_0\,;\Z) \simeq H_2(X;\Z)$. Moreover, using excision principle, we see
	\[
		H_2(\mathcal X_t, \mathcal B_t\,;\Z) \simeq H_2(X^\circ, L \,; \Z) \simeq H_2(X, \mathcal B_0\,;\Z).
	\]
	Consequently, the relative homology sequence of the pair $(\mathcal X_t, \mathcal B_t)$ reads
	\begin{equation}\label{eq: Relative Homology Seq}
		\ldots \to H_2(\mathcal B_t;\Z) \to H_2(\mathcal X_t;\Z) \xto{\rm sp} H_2(X;\Z) \xto{\delta} H_1(\mathcal B_t;\Z) \to \ldots
	\end{equation}
	For a $\Q$-Gorenstein smoothing of $\frac{1}{dn^2}(1,dna-1)$, we have $H_2(\mathcal B_t) \simeq \Z^{d-1}$ and $H_1(\mathcal B_t) \simeq \Z/n\Z$\,(see for instance, \cite[Proposition~13]{Manetti:NormalDegenerationOfPlane}). A cycle $\alpha \in H_2(\mathcal X_t;\Z)$ \emph{specializes} to $X$ via the map $\mathrm{sp}$, and a cycle $\beta \in H_2(X;\Z)$ lifts to the general fiber if and only if the image under $H_2(X;\Z) \to H_1(\mathcal B_t;\Z)$ vanishes.
	\section{Wahl degeneration and exceptional bundles}
	Throughout this section, we consider the following situation: $X$ is a projective normal surface with rational singularities at worst, $H^1(\mathcal O_X)=H^2(\mathcal O_X)=0$, and $(P \in X) \simeq \frac{1}{n^2}(1,na-1)$ for $n>a>0$ with $\op{gcd}(n,a)=1$. The latter one is a particular case among singularities of class $T$, which is called a \emph{Wahl singularity}. Also, we assume that there exists a $\Q$-Gorenstein smoothing $\mathcal X / (0 \in \Delta)$. Under these assumptions, the cycle class map $c_1 \colon \Cl X \to H_2(X;\Z)$ is an isomorphism\,(cf. \cite[Proposition~4.11]{Kollar:Seifert}). The map $\delta$ in (\ref{eq: Relative Homology Seq}) can be regarded as $\Cl X \to H_1(\mathcal B_t;\Z)$. It factors through $\gamma \colon \Cl X \to H_1(L;\Z)$ in (\ref{eq: Div Cycle map}), followed by the map $H_1(L;\Z) \to H_1(\mathcal B_t;\Z)$ induced by the inclusion. Since the latter map can be identified with $\Z/n^2\Z \twoheadrightarrow \Z/n\Z$, (\ref{eq: Relative Homology Seq}) implies the following.
	\begin{equation}\label{eq: Lifting criterion}
		\text{A divisor $D_0 \in \Cl X$ lifts to $D_t \in \Cl \mathcal X_t$ if and only if $\gamma(D_0)$ is divisible by $n$.}
	\end{equation}
	Thanks to Theorem~\ref{thm: Mumford TopologyNSS}, $\gamma(D_0)$ can be computed if we know how the exceptional curves of the minimal resolution $(E \subset Y) \to (P \in X)$ intersect with the proper transform of $D_0$.
	
	However, the sequence (\ref{eq: Relative Homology Seq}) is originated from the topological surgery\,(Figure~\ref{fig: Topological Surgery}), hence a priori there is no reason for the existence of ``geometric'' deformation from $D_0$ to $D_t$. The following theorem provides a geometric interpretation via exceptional vector bundles.
	\begin{theorem}[\cite{Hacking:ExceptionalVectorBundle}]\label{thm: Hacking}
		Let $X$, $(P \in X)\simeq \frac{1}{n^2}(1,na-1)$, $\mathcal X / (0 \in \Delta)$ as above. Suppose there exists a divisor $D_0 \in \op{Cl} X$ satisfying
		\begin{enumerate}[label={\normalfont(\arabic{enumi})}]
			\item $D_0$ is Cartier except at $P$;
			\item for a resolution $Y \to (P \in X)$ with the chain $E_1 \cup \ldots \cup E_r$ of exceptional curves, $\gamma(D_0) = \alpha_1$ where $\alpha_i$ is the generator of $H_1(L;\Z)$ corresponding to $E_i$.
		\end{enumerate}
		Then, after the base change $(0 \in \Delta') \to (0 \in \Delta)$, $t \mapsto t^a$, there is a reflexive sheaf $\mathcal E$ over $\mathcal X' := \mathcal X \times_{\Delta} \Delta'$, locally free except at $P$, such that
		\begin{enumerate}
			\item $(\mathcal E_0)^{\vee\vee} = \mathcal O_X(D_0)^{\oplus n}$;
			\item for general $t \in \Delta$, $\mathcal E_t$ is an exceptional vector bundle of rank $n$;
			\item $\op{sp}(c_1(\mathcal E_t)) = c_1(nD_0)$;
			\item if $\mathcal H/(0 \in \Delta)$ is a relatively ample divisor in $\mathcal X' / ( 0 \in \Delta')$, then $\mathcal E_t$ is slope stable with respect to $\mathcal H_t'$.
		\end{enumerate}
	\end{theorem}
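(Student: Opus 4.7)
The plan is to construct $\mathcal E$ by gluing a canonical local deformation near $P$ with the obvious extension of $\mathcal O_X(D_0)^{\oplus n}$ on the smooth locus, then verify the four conclusions.

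\textbf{Step 1 (Local construction near $P$).} Near $P$, the versal $\Q$-Gorenstein smoothing is by (\ref{eq: versal Q-Gor}) with $d=1$ the family $\mathcal Z^{\sf loc} = (xy = z^n + t) \subset \C^3 \big/\frac{1}{n}(1,-1,a) \times \Delta$, whose index-one cover is the $A_{n-1}$ total space $(xy = z^n + t) \subset \C^3$. On this cover I construct a $\mu_n$-equivariant rank-$n$ free sheaf $\widetilde{\mathcal F}$ carrying the regular representation of $\mu_n$, and descend to obtain a rank-$n$ reflexive sheaf $\mathcal F^{\sf loc}$ on $\mathcal Z^{\sf loc}$, locally free off $P$. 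At $t=0$, the reflexive hull of $\mathcal F^{\sf loc}$ decomposes according to the $n$ characters of $\mu_n$; the hypothesis $\gamma(D_0)=\alpha_1$ forces $D_0$ to generate $\Cl(P\in X) \simeq \Z/n^2$, so that the $n$ characters of $\mu_n$ can be identified with the local classes of $\mathcal O_X(iD_0)$ for $i=0,\ldots,n-1$.

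\textbf{Step 2 (Base change and global gluing).} Since $D_0$ is Cartier away from $P$, the line bundle $\mathcal O_X(D_0)$ extends canonically to a line bundle $\mathcal L$ on the smooth locus $\mathcal X \setminus \{P\}$. To glue $\mathcal L^{\oplus n}$ with $\mathcal F^{\sf loc}$ across the link of $P$ in the family, one must reconcile the $n$-fold direct-sum picture on one side with the $n$-character decomposition on the other, in a way compatible with the family structure. This reconciliation is obstructed by a monodromy discrepancy coming from the $\frac{1}{n}(1,-1,a)$ factor in the ambient quotient; the base change $t \mapsto t^a$ (using $\gcd(n,a)=1$) trivializes this discrepancy and makes the two pictures compatible on the punctured disk. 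After base change, the local and global data glue to a reflexive sheaf $\mathcal E$ on $\mathcal X' = \mathcal X \times_\Delta \Delta'$, and property (a) is built in by construction while (c) follows by specializing $c_1(\mathcal E_t) = n \cdot c_1(\mathcal L)$ back to $X$.

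\textbf{Step 3 (Exceptionality and stability).} On the smooth general fiber $\mathcal E_t$ is locally free, so $\Ext^p(\mathcal E_t,\mathcal E_t) = H^p(\mathcal X_t,\varHom(\mathcal E_t,\mathcal E_t))$. For $p=0$, any endomorphism of $\mathcal E_t$ extends across $\Delta'$ to a $\mu_n$-equivariant endomorphism of $\mathcal F^{\sf loc}$; since the $n$ characters are pairwise distinct, this endomorphism is diagonal in the character decomposition, and continuity together with the globalization of Step 2 forces it to be a scalar. Hence $\Hom(\mathcal E_t,\mathcal E_t)=\C$. For $p>0$, combine semicontinuity of $h^p$ in the family, the vanishing $H^i(\mathcal O_X)=0$ for $i>0$ (which propagates to $\mathcal X_t$), and a Riemann--Roch computation of $\chi(\mathcal E_t,\mathcal E_t) = 1$ to conclude $\Ext^p(\mathcal E_t,\mathcal E_t) = 0$. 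Property (d) is then standard: a destabilizing subsheaf of $\mathcal E_t$ would give a non-scalar endomorphism, contradicting $\Hom(\mathcal E_t,\mathcal E_t) = \C$.

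\textbf{Main obstacle.} I expect Step 2 to be the technical heart of the proof. Matching the global direct-sum picture, involving $n$ copies of a single line bundle, with the local character decomposition, involving $n$ distinct characters of $\mu_n$, requires a delicate monodromy calculation, and it is precisely this calculation that should pin down the base change $t \mapsto t^a$ (rather than, say, $t \mapsto t^n$). A secondary technical point is the collapse of $\dim \Hom$ from $n^2$ at $t=0$ to $1$ for $t \neq 0$: this runs against the direction given by semicontinuity for $\Hom$, and so must be argued through the equivariant structure of $\mathcal F^{\sf loc}$ rather than by a cohomology-jump principle.
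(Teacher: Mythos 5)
Your route is genuinely different from the one the paper sketches, and the two places where you defer the work are precisely where it breaks down. The paper (following Hacking) does not glue a local model to the global family over the link; it makes the base change $t\mapsto t^a$ so that $xy=z^n+t^a$ becomes quasi-homogeneous of degree $na$ for the weights $(1,na-1,a,n)$, performs the corresponding weighted blow-up so that the new central fiber is the reducible surface $\tilde X_0\cup_C W$ with $W=(XY=Z^n+T^a)\subset\P(1,na-1,a,n)$, glues $\mathcal O_{\tilde X_0}(\tilde D_0)^{\oplus n}$ to the localized exceptional bundle $G_W$ along $C$, and extends by deformation theory. Your substitute for $G_W$ --- the descent of a free sheaf with the regular representation from the index-one cover --- is not the right local object: it is a direct sum of $n$-torsion line bundles on the Milnor fiber, whereas the bundle you must match it with on the overlap restricts to $n$ copies of $\mathcal O(D_0)$, whose class $\alpha_1$ generates $H^2(L;\Z)\simeq\Z/n^2$ while the torsion classes land in the index-$n$ subgroup. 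Producing the required holomorphic identification (or showing it is obstructed and that $t\mapsto t^a$ kills the obstruction) is the entire content of the theorem, and "the base change trivializes the monodromy discrepancy" is an assertion, not an argument; moreover the exponent $a$ is forced by the quasi-homogeneity just described, not by a monodromy count, so your proposed mechanism does not even predict the correct base change.

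Step 3 is also not a proof. A homomorphism on a single fiber $\mathcal X'_t$ has no reason to extend over $\Delta'$; the relevant pushforwards are not flat, which is exactly why the paper's own $\Ext^2$-vanishing argument in Section 5 has to work around semicontinuity with a Tor spectral sequence, and why the drop of $\dim\Hom$ from $n^2$ at $t=0$ to $1$ at $t\neq0$ cannot be waved through "continuity." Your deduction of (d) is backwards: stability implies simplicity, but a destabilizing subsheaf does not yield a non-scalar endomorphism, so $\Hom(\mathcal E_t,\mathcal E_t)=\C$ does not give slope stability (simple, non-stable bundles exist). Finally, $\chi(\mathcal E_t,\mathcal E_t)=1$ together with $\Hom=\C$ only yields $\dim\Ext^1=\dim\Ext^2$, not their vanishing; $H^i(\mathcal O_{\mathcal X_t})=0$ gives nothing directly about $\Ext^2(\mathcal E_t,\mathcal E_t)\simeq\Hom(\mathcal E_t,\mathcal E_t\otimes\omega_{\mathcal X_t})^\vee$, which requires a separate stability or degeneration argument.
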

	This theorem is a key ingredient of this paper. To derived proper variations, we need to introduce how to construct $\mathcal E$. First of all, we may assume that $(P \in \mathcal X) / ( 0 \in \Delta)$ is a versal $\Q$-Gorenstein smoothing of $\frac{1}{n^2}(1,na-1)$, namely, it is locally isomorphic to
	\[
		(xy = z^n + t ) \subset \C^3_{x,y,z} \Big / \frac{1}{n}(1,-1,a) \times \Delta_t.
	\]
	After the base change, $\mathcal X'$ is the deformation
	\begin{equation}\label{eq: QGor Smoothing after Base change}
		(xy = z^n + t^a) \subset \C^3_{x,y,z} \Big / \frac 1n (1,-1,a) \times \Delta_t.
	\end{equation}
	Now we consider a weighted blow up as follows. First, embed the ambient space into the affine toric variety $\C^4_{x,y,z,t} / \frac 1n (1,-1,a,n)$. The corresponding fan $\Sigma$ is the orthant spanned by the rays $\R_{>0} \cdot e_1,\ldots,  \R_{>0} \cdot e_4$ in the lattice $N := \Z^4 + \Z \cdot \frac1n(1,-1,a,n)$. The weighted blow up is given by adding the ray $\R_{>0} \cdot (1,na-1,a,n)$\,(and the subsequent subdivision of cones). Since $\mathcal X' \subset \C^4 / \frac 1n (1,-1,a,n)$, we may consider the proper transform of $\mathcal X'$ along this weighted blow up. Let us denote it by $\tilde{\mathcal X}$. The exceptional divisor of $\tilde{\mathcal X}_0 \to \mathcal X_0'$ is isomorphic to
	\[
		W:= (XY = Z^n + T^a) \subset \P(1,na-1,a,n).
	\]
	Moreover, the central fiber $\tilde{\mathcal X}_0$ is the union of $\tilde X_0$\,(the proper transform of $X$), and $W$ where these components intersect scheme-theoretically along the smooth rational curve $C:=(T=0) \subset W$. On the other hand, the component $\tilde X_0$ is a partial resolution of $X$, having an irreducible exceptional locus that corresponds to $E_1$ in the resolution\,(see Figure~\ref{fig: tilde X_0}). The hypotheses on $D_0$ ensures the existence of $\tilde D_0 \in \Pic \tilde X_0$ with $(\tilde D_0 . C)=1$\,(Remark~\ref{rmk: finding desired divisor on partial resolution}).
	\begin{figure}[h!]
	\pgfdeclarelayer{auxLayer}
	\pgfsetlayers{auxLayer,main}
	\begin{tikzpicture}
		\draw node at (0,4){};
%
		\coordinate (posX) at (-2,-3);
		\coordinate (posPartialRes) at (2.25,0.5);
		\coordinate (posMinRes) at (-3,1.5);
		\newcommand*{\ShapeX}[2]{
			\draw[-] ($(0,0)!-0.5!($#1 + #2$)$) -- ++#1 -- ++#2 -- ++($(0,0)-#1$) -- cycle;
		}%
		\newcommand*{\SingularMark}[1]{
			\begin{scope}[shift={#1}]
				\foreach \n in {0,1,2}{
					\pgfmathtruncatemacro{\nAng}{\n*60}
					\draw[-,line width=0.5pt] (\nAng:2pt) -- (\nAng:-2pt);
				}
			\end{scope}
		}
		\newcommand*{\ShapeP}[3]{%
			\draw[-] ($(0,0)!-0.5!($#1 + #2$)$) -- ++#1 -- ++#3 --
			++($#2-#3$) -- ++($(0,0)-#1$) -- ++($(0,0)-#3$) -- cycle;
		}%
%
		\begin{scope}[shift={(posX)}]
			\draw node at (-22.5:2) {$X$};
			\ShapeX{(75:1.85)}{(3,0)}
			\fill[black] (0,0) circle (1.5pt);
			\draw node[anchor=north] at (0,0) {$\scriptstyle P$};
		\end{scope}
		\begin{scope}[shift={(posPartialRes)}]
			\draw node at (-22.5:1.75) {$\tilde X_0$};
			\ShapeP{(75:1.85)}{(3,0)}{(16:1.7)}
			\draw[-,red] (75:1) -- (75:-1);
			\draw[red] node[anchor=north east] at (75:1) {$\scriptstyle C$};
			\SingularMark{(75:-0.45)}
			\draw node[anchor=west] at (75:-0.45) {$\scriptstyle Q$};
			\draw[-]
				(0,-1) .. controls ++(180+20:0.7) and ++(270-45:0.33) .. (180+40:0.75)	
				.. controls ++(270-45:-0.4) and ++(15:-0.25) .. (0.45,0)
				.. controls ++(15:1) and ++(180+75:-0.35) .. (45:0.75)
				.. controls ++(180+75:0.35) and ++(180+25:0.66) .. (1.5,0.65);
			\draw node[anchor=east] at (180+40:0.75) {$\scriptstyle \tilde D_0$};
		\end{scope}
		\begin{scope}[shift={(posMinRes)}]
			\coordinate (Anc1) at (1,2);
			\coordinate (Anc2) at ($(Anc1) + (180+45:1.75)$);
			\coordinate (Anc3) at ($(Anc2) + (270+30:1.25)$);
			\coordinate (Anc4) at ($(Anc3) + (180+10:1.65)$);
			\coordinate (Anc5) at ($(Anc4) + (90+15:2.05)$);
			\draw[-,red] ($(Anc2)!1.15!(Anc1)$) -- ($(Anc1)!1.15!(Anc2)$);
			\foreach \n in {2,4}{
				\pgfmathtruncatemacro{\m}{\n+1}
				\draw[-,densely dashed] ($(Anc\m)!1.15!(Anc\n)$) -- ($(Anc\n)!1.15!(Anc\m)$);
			}
			\begin{scope}
				\path [clip] ($(Anc3)!0.5!(Anc4)$) circle (15pt) [insert path={(3,3) -- (3,-3) -- (-3,-3) -- (-3,3) -- (3,3)}];
				\draw[-,densely dashed] ($(Anc4)!1.15!(Anc3)$) -- ($(Anc3)!1.15!(Anc4)$);
			\end{scope}
			\draw[red] node[shift={(-0.25,0.25)}] at ($(Anc1)!0.5!(Anc2)$) {$\scriptstyle E_1$};
			\draw node[shift={(0.32,0.06)}] at ($(Anc2)!0.5!(Anc3)$) {$\scriptstyle E_2$};
			\draw node at ($(Anc3)!0.5!(Anc4)$) {$\ldots$};
			\draw node[shift={(-0.24,0)}] at ($(Anc4)!0.5!(Anc5)$) {$\scriptstyle E_r$};
		\end{scope}
		\draw [-{>[scale=2]}] ($(posMinRes)!0.2!(posX)$) -- node[anchor=east] {\tiny \begin{tabular}{c}minimal\\ resolution\end{tabular}} ($(posMinRes)!0.68!(posX)$);
		\draw [->] ($(posPartialRes)!0.33!(posX)$) -- node[anchor=north west] {\tiny $C \mapsto \{P\}$} ($(posPartialRes)!0.6!(posX)$);
		\draw [->] ($(posMinRes)!0.25!(posPartialRes)$) -- node[anchor=south,sloped]{%
			\tiny\begin{tabular}{c}
				$E_1 \mapsto C$ \\
				$E_2\cup\ldots\cup E_r \mapsto \{Q\}$
			\end{tabular}
		} ($(posMinRes)!0.66!(posPartialRes)$);
	\end{tikzpicture}
	\caption{Construction of $\tilde X_0$ in terms of $X$ and its minimal resolution}\label{fig: tilde X_0}
	\end{figure}	
	Then in the central fiber $\tilde{\mathcal X}_0 = \tilde X_0 \cup_C W$, one may construct an exceptional vector bundle by glueing $\mathcal O(\tilde D_0)^{\oplus n}$ and a certain exceptional vector bundle $G_W$\,(called ``localized exceptional bundle'' in \cite[\textsection 5]{Hacking:ExceptionalVectorBundle}) on $W$. In particular, the outcome fits into the exact sequence
	\[
		0 \to \tilde{\mathcal E}_0 \to \mathcal O_{\tilde X_0}(\tilde D_0)^{\oplus n} \oplus G_W \to \mathcal O_C(1)^{\oplus n} \to 0
	\]
	Using standard arguments in deformation theory of vector bundles, $\tilde{\mathcal E}_0$ extends to a locally free sheaf $\tilde{\mathcal E}$ on $\tilde{\mathcal X}$. The sheaf $\mathcal E$ in the statement is the reflexive hull of the pushforward along $\tilde{\mathcal X} \to \mathcal X'$.
	
	\begin{remark}\label{rmk: finding desired divisor on partial resolution}
		Let $D_Y$ be the proper transform of $D_0$ in the resolution $Y \to (P \in X)$. By Proposition~\ref{prop: divisors having same description in the link} and $\gamma(D_0) = \alpha_1$, there are integers $a_1,\ldots, a_r$ such that $D' := D_Y + a_1 E_1 + \ldots + a_r E_r$ satisfies $(D'.E_1)=1$ and $(D'.E_2)=\ldots = (D'. E_r)=0$. Now, the divisor $\tilde D_0$ is the pushforward of $D'$ along $Y \to \tilde X_0$.
		
		More generally, suppose $M_0 \in \Cl X$ is a divisor which is Cartier except at $P$ and $\gamma(M_0) = k \alpha_1$. Then, there exists $\tilde M_0 \in \Pic \tilde X_0$ such that $(\tilde M_0 . C) = k$.
	\end{remark}
	On the other hand, the same technique can be applied to construct line bundles.
	\begin{lemma}\label{lem: O_W(na-1)}
		Let $\mathcal O_W(na-1)$ be the restriction of $\mathcal O_{\P(1,na-1,a,n)}(na-1)$. Then $\mathcal O_W(na-1)$ is invertible.
	\end{lemma}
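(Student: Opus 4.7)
The plan is to identify precisely where $\mathcal{O}_{\P(1,na-1,a,n)}(na-1)$ fails to be invertible on the ambient weighted projective space, and then verify that the non-invertibility locus is disjoint from $W$.

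First I would check that $\P(1,na-1,a,n)$ is well-formed. All pairwise greatest common divisors of the weights are equal to $1$: we have $\gcd(1,\cdot)=1$ trivially, $\gcd(na-1,a)=\gcd(-1,a)=1$, $\gcd(na-1,n)=\gcd(-1,n)=1$, and $\gcd(a,n)=1$ by hypothesis. Consequently the singular locus of $\P(1,na-1,a,n)$ consists only of the two isolated points $Q_Z=[0{:}0{:}1{:}0]$ and $Q_T=[0{:}0{:}0{:}1]$, with stabilizers $\mu_a$ and $\mu_n$ respectively; everywhere else the stack structure is trivial, so $\mathcal{O}(na-1)$ is already a line bundle. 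At $Q_Z$ the sheaf $\mathcal{O}(k)$ descends to a line bundle on the coarse space if and only if $a\mid k$, and at $Q_T$ if and only if $n\mid k$. Since $na-1\equiv -1 \pmod a$ and $na-1\equiv-1 \pmod n$, the sheaf $\mathcal{O}(na-1)$ fails to be invertible exactly at the two points $Q_Z$ and $Q_T$ (assuming $a,n>1$; the boundary cases are trivial).

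Next I would plug the two offending points into the defining equation $XY=Z^n+T^a$ of $W$. At $Q_Z$ the left side is $0$ while the right side is $1$, and similarly at $Q_T$ the left side vanishes while the right side equals $1$. Hence neither $Q_Z$ nor $Q_T$ lies on $W$, so $W$ is contained in the open locus on which $\mathcal{O}_{\P(1,na-1,a,n)}(na-1)$ is already locally free of rank one. The restriction $\mathcal{O}_W(na-1)$ is therefore invertible.

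The only place that requires any care is the first paragraph — namely the claim that $\mathcal{O}(k)$ is invertible at a quotient point precisely when the stabilizer weight divides $k$. I would justify this by pulling back to the toric chart $\mathbb A^3\to\mathbb A^3/\mu_a$ around $Q_Z$ (respectively $\mu_n$ around $Q_T$), where $\mathcal{O}(1)$ is the character of weight one and so $\mathcal{O}(k)$ descends to the quotient iff the weight $k$ action is trivial. I do not expect any genuine obstacle; the statement really is just a check that the defining equation of $W$ avoids the two singular points of the ambient weighted projective space.
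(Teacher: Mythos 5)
Your strategy (locate the non-invertibility locus of $\mathcal O_{\P(1,na-1,a,n)}(na-1)$ on the ambient space and check it misses $W$) is essentially a reorganization of the paper's argument, which instead covers $W$ by the three charts $(X\neq 0)$, $(Y\neq 0)$, $(ZT\neq 0)$ and exhibits an explicit degree-$(na-1)$ generator of the localized graded module on each. But as written your first paragraph contains a genuine error: the singular locus of $\P(1,na-1,a,n)$ is \emph{not} just $\{Q_Z,Q_T\}$. The point $Q_Y=[0{:}1{:}0{:}0]$ has stabilizer $\mu_{na-1}$ (the relevant invariant is the weight of each single nonzero coordinate, not only the pairwise gcds), so for $na-1>1$ it is a quotient singularity $\frac{1}{na-1}(1,a,n)$ of the ambient space. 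Moreover $Q_Y$ \emph{does} lie on $W$: at $[0{:}1{:}0{:}0]$ both sides of $XY=Z^n+T^a$ vanish. So your claim that $W$ avoids the singular locus is false, and had $\mathcal O(na-1)$ failed to be invertible at $Q_Y$, your argument would give the wrong conclusion.

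The gap is harmless but must be filled: by your own criterion, $\mathcal O(k)$ is invertible at $Q_Y$ iff $(na-1)\mid k$, which holds for $k=na-1$. This is exactly what the paper's computation $R(na-1)_{(Y)}\simeq R_{(Y)}\cdot Y$ encodes --- the coordinate $Y$ itself, of degree $na-1$, trivializes the sheaf on the chart $(Y\neq 0)$. With $Q_Y$ accounted for, your argument goes through: the only points where $\mathcal O_\P(na-1)$ fails to be invertible are $Q_Z$ and $Q_T$ (for $a,n>1$), and your substitution into $XY=Z^n+T^a$ correctly shows neither lies on $W$. The two proofs then buy the same thing; the paper's is slightly more self-contained in that it never needs the general dictionary between divisibility of the twist and invertibility at a cyclic quotient point, only explicit generators on a three-chart cover.
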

	\begin{proof}
		Let $R$ be the weighted homogeneous coordinate ring of $W$:
		\[
			R = \C[X,Y,Z,T] / (XY - Z^n - T^a),\quad \deg(X,Y,Z,T) = (1,na-1,a,n).
		\]
		Since $W$ is covered by $(X\neq0) \cup (Y \neq 0) \cup (ZT \neq 0)$, it suffices to show that for each $\mathbf{x} \in \{X,Y,ZT\}$,
		\[
			R(na-1)_{(\mathbf{x})} = \Bigl\{ \frac{f}{\mathbf{x}^d} : \deg f - d \deg \mathbf{x} = na-1 \Bigr\}
		\]
		is a free $R_{(\mathbf{x})}$-module of rank one. Indeed,
		\[
			R(na-1)_{(X)} \simeq R_{(X)} \cdot X^{na-1},\quad R(na-1)_{(Y)} \simeq R_{(Y)} \cdot Y,\quad R(na-1)_{(ZT)} \simeq R_{(ZT)} \cdot (Z^pT^q)^{na-1}
		\]
		where $p,q$ are integers with $pa+qn=1$.
	\end{proof}
	By the intersection theory of weighted projective spaces, one can prove that $\mathcal O_W(na-1)\big\vert_C = \mathcal O_C(n)$. Hence, we may glue $\mathcal O_{\tilde X_0}( n \tilde D_0)$ and $\mathcal O_W(na-1)$ along $\mathcal O_C(n)$ to obtain a line bundle $\tilde {\mathcal L}_0$ on $\tilde{\mathcal X}_0$. It extends to a line bundle $\tilde {\mathcal L}$ on $\tilde{\mathcal X}$. This construction can be generalized to the surfaces with several Wahl singularities.
	\begin{proposition}\label{prop: Hacking L.B.}
		Let $X$ be a projective normal surface with $H^1(\mathcal O_X)=H^2(\mathcal O_X)=0$. Let $M_0 \in \Cl X$ be a divisor such that
		\begin{enumerate}[label={\normalfont (\arabic{enumi})}]
			\item $M_0$ is Cartier except at Wahl singularites $(Q_i \in X) \simeq \frac{1}{n_i^2}(1,na-1)$, $i=1,\ldots,s$;
			\item $\gamma_{Q_i}(M_0) \in n_i \cdot H_1(L_i;\Z)$ where $L_i$ is the link of $(Q_i \in X)$ and $\gamma_{Q_i} \colon \Cl X \to H_1(L_i;\Z)$ is the map in (\ref{eq: Div Cycle map}).
		\end{enumerate}
		Let $\mathcal X / (0 \in \Delta)$ be a $\Q$-Gorenstein smoothing of $X$. Then, after a finite base change $(0 \in \Delta') \to (0 \in \Delta)$, there exists a reflexive sheaf $\mathcal M$ of rank $1$ over $\mathcal X' := \mathcal X \times_{\Delta} \Delta'$ such that
		\begin{enumerate}
			\item $(\mathcal M_0)^{\vee\vee} = \mathcal O_X(M_0)$;
			\item $\mathcal M_t$ is a line bundle on $\mathcal X_t'$;
			\item $\op{sp}(c_1(\mathcal M_t)) = c_1(\mathcal M_0)$.
		\end{enumerate}
	\end{proposition}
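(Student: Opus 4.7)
The plan is to replicate, at every singular point simultaneously, the weighted-blowup and gluing construction sketched just before the proposition for a single Wahl singularity, with the powers of the building blocks adjusted to match the prescribed divisibilities. Write $\gamma_{Q_i}(M_0) = k_i n_i \alpha_1^{(i)}$, where $\alpha_1^{(i)}$ is the generator of $H_1(L_i;\Z)$ associated to the curve $E_1^{(i)}$ closest to the short leg in the dual graph of the minimal resolution at $Q_i$. The integers $k_i$ are well defined by hypothesis~(b).

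First I would perform a finite base change $(0\in\Delta')\to(0\in\Delta)$ so that, simultaneously at every $Q_i$, the local $\Q$-Gorenstein smoothing takes the model form $(xy = z^{n_i} + t^{a_i}) \subset \C^3/\tfrac{1}{n_i}(1,-1,a_i) \times \Delta'$ of~(\ref{eq: QGor Smoothing after Base change}); this is the simultaneous analogue of the base change $t\mapsto t^a$ in Theorem~\ref{thm: Hacking}, carried out by taking a common multiple of the relevant exponents. Then I would perform the toric weighted blowup along $\R_{>0}\cdot(1,n_ia_i-1,a_i,n_i)$ at each $Q_i$, producing a birational morphism $\pi\colon \tilde{\mathcal X}\to \mathcal X'$ that is an isomorphism outside $\{Q_1,\ldots,Q_s\}$ and whose central fiber decomposes as $\tilde{\mathcal X}_0 = \tilde X_0 \cup \bigcup_i W_i$, with $\tilde X_0 \cap W_i = C_i$ and $W_i = (XY=Z^{n_i}+T^{a_i})\subset \P(1,n_ia_i-1,a_i,n_i)$.

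Next, applying Remark~\ref{rmk: finding desired divisor on partial resolution} simultaneously at every $Q_i$ with intersection datum $k_i n_i$, I would produce a single line bundle $\tilde M_0 \in \Pic \tilde X_0$ with $(\tilde M_0 \cdot C_i) = k_i n_i$ for every $i$: starting from the proper transform of $M_0$ on the full minimal resolution $Y\to X$, I add integer combinations of the exceptional curves to make all intersection numbers with exceptional curves other than the $E_1^{(i)}$ zero, then push down to $\tilde X_0$. On each $W_i$, Lemma~\ref{lem: O_W(na-1)} gives the line bundle $\mathcal O_{W_i}(n_i a_i-1)$, and by the intersection computation stated just after that lemma its $k_i$-th tensor power restricts to $\mathcal O_{C_i}(k_i n_i)$. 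Hence $\mathcal O_{\tilde X_0}(\tilde M_0)$ and the $\mathcal O_{W_i}(k_i(n_ia_i-1))$ glue along the $C_i$ to a line bundle $\tilde{\mathcal L}_0$ on $\tilde{\mathcal X}_0$.

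I would then extend $\tilde{\mathcal L}_0$ to a line bundle $\tilde{\mathcal L}$ on $\tilde{\mathcal X}$ by the standard inductive argument: the obstruction to extending a line bundle to each infinitesimal thickening lies in $H^2(\tilde{\mathcal X}_0,\mathcal O)$, whose vanishing I would establish via a Mayer--Vietoris argument on $\tilde{\mathcal X}_0 = \tilde X_0 \cup \bigcup_i W_i$ glued along the rational curves $C_i$, using $H^2(\mathcal O_X)=0$ (which passes to $\tilde X_0$ since the morphism to $X$ contracts a chain of rational curves), $H^2(\mathcal O_{W_i})=0$, and $H^1(\mathcal O_{C_i})=0$. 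Formal extension then analytifies by Grauert. Finally I would set $\mathcal M := (\pi_*\tilde{\mathcal L})^{\vee\vee}$. Over the general fiber $\pi$ is an isomorphism, so $\mathcal M_t \simeq \tilde{\mathcal L}_t$ is a line bundle, proving~(b). On the central fiber, $\pi_*\tilde{\mathcal L}_0$ agrees with $\mathcal O_X(M_0)$ away from the $Q_i$ because $\tilde M_0$ was constructed from a proper transform of $M_0$, and since both sides are reflexive of rank one they coincide, yielding~(a). Assertion~(c) follows since $c_1(\mathcal M_t)$ and $c_1(\mathcal M_0)$ are both restrictions of a single class pulled back from $c_1(\tilde{\mathcal L}) \in H^2(\tilde{\mathcal X};\Z)$, compatibly with the specialization map from (\ref{eq: Relative Homology Seq}).

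The main obstacle will be the extension in the penultimate step: one must carefully verify the $H^2$-vanishing on the reducible central fiber $\tilde{\mathcal X}_0$, and also check that the Picard scheme of $\tilde{\mathcal X}/\Delta'$ is smooth enough at $[\tilde{\mathcal L}_0]$ for the extension to exist analytically, rather than only formally. A secondary technical point is arranging the base change in the first step coherently so that every local model simultaneously acquires the desired form.
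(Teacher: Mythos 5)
Your proposal follows essentially the same route as the paper: the same base change, simultaneous weighted blow-ups at the $Q_i$, the gluing of $\mathcal O_{\tilde X_0}(\tilde M_0)$ with the $\mathcal O_{W_i}(k_i(n_ia_i-1))$ along $\mathcal O_{C_i}(k_in_i)$, and extension of the resulting line bundle over the family followed by pushforward and reflexive hull. The only cosmetic difference is that the paper justifies the extension by asserting $\tilde{\mathcal M}_0$ is an exceptional line bundle, which subsumes your $H^2$-vanishing argument (and additionally gives unobstructed, unique deformation), so your worry about that step is already resolved by the same cohomological input you identify.
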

	\begin{proof}
		We make the base change $(0 \in \Delta') \to (0 \in \Delta)$ such that the ramification index is $a_i$ locally at each $(Q_i \in \mathcal X) / (0 \in \Delta)$. Let $\Phi \colon \tilde{\mathcal X} \to \mathcal X'$ be the proper birational morphism that is obtained by respective weighted blow ups at each $(Q_i \in \mathcal X')$, and let $W_1,\ldots,W_s$ be the corresponding exceptional divisors. Then, the central fiber has the irreducible decomposition $\tilde{\mathcal X}_0 = \tilde X_0 \cup W_1 \cup \ldots \cup W_s$ where $\tilde X_0$ is the proper transform of $X$. Then, by Lemma~\ref{lem: O_W(na-1)}, we have the following exact sequence of sheaves
		\[
			0 \to \tilde{\mathcal M}_0 \to \mathcal O_{\tilde X_0}(\tilde M_0) \oplus \mathcal O_{W_1}(k_1(n_1a_1-1)) \oplus \ldots \oplus \mathcal O_{W_s}(k_s(n_sa_s-1)) \to \mathcal O_{C_1}(k_1n_1) \oplus \ldots \oplus \mathcal O_{C_s}(k_sn_s) \to 0,
		\]
		where $\tilde M_0 \in \Pic \tilde X_0$ satisfies $(M_0.C_i) = k_i n_i$\, (Remark~\ref{rmk: finding desired divisor on partial resolution}), and $C_i = \tilde X_0 \cap W_i$. It can be shown that $\tilde{\mathcal M}_0$ is an exceptional line bundle, hence it deforms to $\tilde{\mathcal M} \in \Pic \tilde{\mathcal X}$ whose restriction to the general fiber is a line bundle.
	\end{proof}
	Combining Theorem~\ref{thm: Hacking} and Propositin~\ref{prop: Hacking L.B.}, we get the following corollary, which is suitable for our purpose.
	\begin{corollary}\label{cor: Hacking V.B.}
		Let $X$ be a normal projective surface with $H^1(\mathcal O_X) = H^2(\mathcal O_X)=0$. Let $P,Q_1,\ldots,Q_s \in X$ be Wahl singularities of respective indices $(n,a),\, (n_1,a_1),\,\ldots,\,(n_s,a_s)$, and let $L_*$ (resp. $\gamma_*$) be the link (resp. the map $\gamma$ in (\ref{eq: Div Cycle map})) of $* \in \{P,Q_1,\ldots,Q_s\}$. Assume there exists a divisor $D_0 \in \Cl X$ such that
		\begin{enumerate}[label={\normalfont(\arabic{enumi})}]
			\item $D_0$ is Cartier except at $\{ P,Q_1,\ldots,Q_s\}$;
			\item for a resolution $(E_1 \cup \ldots \cup E_r \subset Y) \to (P \in X)$ of the singularity, $\gamma_P(D_0) = \alpha_1$ where $\alpha_1$ is the generator corresponding to $E_1$.
			\item $\gamma_{Q_i}(D_0) \in n_i \cdot H_1(L_{Q_i};\Z)$.
		\end{enumerate}
		Let $\mathcal X / (0 \in \Delta)$ be a one parameter $\Q$-Gorenstein smoothing. Then, after a finite base change $(0 \in \Delta') \to (0 \in \Delta)$, there exists a reflexive sheaf $\mathcal E$ over $\mathcal X' = \mathcal X \times_{\Delta} \Delta'$, which is locally free over $\mathcal X ' \setminus\{P,Q_1,\ldots,Q_s\}$, satisfying the statements (a--d) in Theorem~\ref{thm: Hacking}
	\end{corollary}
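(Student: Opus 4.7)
The plan is to merge the two constructions already in the paper: the rank-$n$ vector bundle construction of Theorem~\ref{thm: Hacking} at $P$ together with the line bundle constructions of Proposition~\ref{prop: Hacking L.B.} at each $Q_i$, carried out on a single common model.

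First I would take the base change $(0 \in \Delta') \to (0 \in \Delta)$, $t \mapsto t^N$, where $N$ is a common multiple of $a, a_1, \ldots, a_s$, so that the local form (\ref{eq: QGor Smoothing after Base change}) is realized simultaneously at every marked singularity of $\mathcal X'$. Let $\Phi \colon \tilde{\mathcal X} \to \mathcal X'$ be the proper birational morphism obtained by the respective weighted blow ups at $P, Q_1, \ldots, Q_s$. Its central fiber decomposes as
\[
\tilde{\mathcal X}_0 = \tilde X_0 \cup W \cup W_1 \cup \ldots \cup W_s,
\]
where $W$ is the exceptional divisor over $P$, each $W_i$ is the exceptional divisor over $Q_i$, and $\tilde X_0$ is the partial resolution of $X$ meeting $W$ along a smooth rational curve $C$ and each $W_i$ along a smooth rational curve $C_i$; the curves $C, C_1, \ldots, C_s$ are pairwise disjoint on $\tilde X_0$.

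Next I would produce a divisor $\tilde D_0 \in \Pic \tilde X_0$ with the prescribed intersection numbers. Write $\gamma_{Q_i}(D_0) = k_i n_i \alpha_1^{(i)}$ using condition (c). Applying Remark~\ref{rmk: finding desired divisor on partial resolution} independently at each singular point---the adjustment at each point only involves the exceptional curves over that point---produces a single $\tilde D_0 \in \Pic \tilde X_0$ with $(\tilde D_0 . C) = 1$ and $(\tilde D_0 . C_i) = k_i n_i$. Then I glue a rank-$n$ sheaf on $\tilde{\mathcal X}_0$ along the double curves: on $\tilde X_0$ take $\mathcal O_{\tilde X_0}(\tilde D_0)^{\oplus n}$; on $W$ take the localized exceptional bundle $G_W$ of \cite{Hacking:ExceptionalVectorBundle}, whose restriction to $C$ is $\mathcal O_C(1)^{\oplus n}$; on each $W_i$ take $\mathcal O_{W_i}(k_i(n_i a_i - 1))^{\oplus n}$, which by Lemma~\ref{lem: O_W(na-1)} restricts to $\mathcal O_{C_i}(k_i n_i)^{\oplus n}$. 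The gluing gives the short exact sequence
\[
0 \to \tilde{\mathcal E}_0 \to \mathcal O_{\tilde X_0}(\tilde D_0)^{\oplus n} \oplus G_W \oplus \bigoplus_{i=1}^{s} \mathcal O_{W_i}(k_i(n_i a_i - 1))^{\oplus n} \to \mathcal O_C(1)^{\oplus n} \oplus \bigoplus_{i=1}^{s} \mathcal O_{C_i}(k_i n_i)^{\oplus n} \to 0
\]
on $\tilde{\mathcal X}_0$.

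Thereafter the remaining steps follow the same pattern as the proofs of Theorem~\ref{thm: Hacking} and Proposition~\ref{prop: Hacking L.B.}: verify that $\tilde{\mathcal E}_0$ is exceptional, deform it to a locally free sheaf $\tilde{\mathcal E}$ on $\tilde{\mathcal X}$ via standard obstruction theory, and set $\mathcal E := (\Phi_* \tilde{\mathcal E})^{\vee\vee}$, reading off (a)--(d) of Theorem~\ref{thm: Hacking} from the component-by-component construction. I expect the main obstacle to be the $\Ext$ computation for $\tilde{\mathcal E}_0$, where one must show that the additional gluing pieces across the $C_i$'s neither enlarge $\Hom$ beyond $\C$ nor produce higher $\Ext$ classes. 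Since $C, C_1, \ldots, C_s$ are pairwise disjoint, the associated Mayer--Vietoris spectral sequence splits the computation into independent contributions from $(\tilde X_0, W, C)$, handled in \cite{Hacking:ExceptionalVectorBundle}, and from each $(\tilde X_0, W_i, C_i)$, which reduces to the Ext-vanishing appearing in the proof of Proposition~\ref{prop: Hacking L.B.}. Once exceptionality is established, the deformation step and descent to $\mathcal X'$ are routine.
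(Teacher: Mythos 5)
Your proposal matches the paper's proof: the paper likewise performs the weighted blow ups at all of $P,Q_1,\ldots,Q_s$ simultaneously, chooses $\tilde D_0 \in \Pic \tilde X_0$ with $(\tilde D_0.C)=1$ and $(\tilde D_0.C_i)=k_in_i$ via Remark~\ref{rmk: finding desired divisor on partial resolution}, and glues $\mathcal O_{\tilde X_0}(\tilde D_0)^{\oplus n}$, $G_W$, and the $\mathcal O_{W_i}(k_i(n_ia_i-1))^{\oplus n}$ before deforming and taking the reflexive hull of the pushforward. The extra detail you supply (the explicit gluing sequence and the disjointness of $C,C_1,\ldots,C_s$ splitting the $\Ext$ computation) is consistent with, and slightly more explicit than, what the paper writes.
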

	\begin{proof}
		Let $W,W_1,\ldots,W_s$ be the exceptional divisors of the weighted blow up over $P,Q_1,\ldots,Q_s$, respectively, and let $C:=\tilde X_0 \cap W$, $C_i := \tilde X_0 \cap W_i$. We may find $\tilde D_0 \in \Pic \tilde X_0$ such that $(\tilde D_0.C)=1$ and $(\tilde D_0 . C_i) = k_i n_i$. Then $\tilde{\mathcal E}_0$ can be obtained by glueing
		\[
			\mathcal O_{\tilde X_0}(\tilde D_0)^{\oplus n},\ G_W,\ \mathcal O_{W_1}(k_1(n_1a_1-1))^{\oplus n},\ \ldots,\ \mathcal O_{W_s}(k_s(n_sa_s-1))^{\oplus n}
		\]
		The remaining part is identical to the corresponding part in the proof of Theorem~\ref{thm: Hacking} or of Proposition~\ref{prop: Hacking L.B.}.
	\end{proof}
	\begin{remark}\label{rmk: comparing intersection theories}
		Assume further that $\op{Sing} X = \{P,Q_1,\ldots,Q_s\}$\,(that is, no other singularities) in the above corollary. Then, for any line bundle $\mathcal M_t$ on the general fiber, there exists $N \gg 0$ such that $\mathcal M_t^{\otimes N}$ extends to a line bundle over the whole family, i.e. there exists $\mathcal L \in \Pic \mathcal X'$ such that $\mathcal L_t = \mathcal M_t^{\otimes N}$. For instance, let $N := n^2 \prod_{i=1}^s n_i^2$, then for any $M_0 \in \Cl X$, $\gamma_P(NM_0) = 0$ and $\gamma_{Q_i}(NM_0) = 0\ \forall i$. In particular, $NM_0 \in \Pic X$. Proposition~\ref{prop: Hacking L.B.} yields a reflexive sheaf $\mathcal M$ over $\mathcal X'$ with $\mathcal M_0 = \mathcal O_X(NM_0)$. From the construction, it is obvious that $\mathcal M$ is locally free.
	   
	   One important consequence of this observation is the following: the specialization map induces the isomorphism $H_2(\mathcal X_t';\Q) \to H_2(\mathcal X;\Q)$ preserving the intersection pairing. In \textsection\ref{sec: Main}, we will see that $\Q$-Gorenstein smoothing of $\frac{1}{dn^2}(1,dna-1)$ does not have such a property.
	\end{remark}
	\section{Orthogonal collections from degenerations}\label{sec: Main}
	\subsection{Local analysis}\label{subsec: Main_ Local analysis} Let $(P \in X) \simeq \frac{1}{dn^2}(1,dna-1)$ be a singularity of class T. It corresponds to the affine toric variety that is defined by the cone $\sigma \subset N = \Z^2$ bounded by two rays $\rho_0 = \R_{>0} \cdot (0,1)$ and $\rho_d = \R_{>0} \cdot (dn^2,1-dna)$. Let $\Sigma_1$ be the fan which is obtained by subdividing the cone $\sigma$ via $\rho_1 = \R_{>0} \cdot (n^2,1-na)$. Then the cones $\sigma_1 := \op{Conv}(\rho_0,\rho_1)$ and $\sigma_2 := \op{Conv}(\rho_1,\rho_d)$ correspond to the singularities $\frac{1}{n^2}(1,na-1)$ and $\frac{1}{(d-1)n^2}(1,(d-1)na-1)$, respectively. A subsequent subdivision by $\{\rho_k = \R_{>0}\cdot(kn^2, 1-kna)\}_{k=2,\ldots,d-1}$ gives the fan $\Sigma$ whose associated toric variety has $d \times \frac{1}{n^2}(1,na-1)$.
	\begin{figure}[h!]
	\centering
	\begin{tikzpicture}
%
		\foreach \n in {0,1}{
			\ifthenelse{\n=0}{
				\coordinate (ScopeShift) at (0,0);
			}{
				\coordinate (ScopeShift) at (7,0);
			}
			\begin{scope}[shift={(ScopeShift)}]
				\coordinate (rho0 \n) at (0,1);
				\coordinate (rhod \n) at (4,-3);
				\draw[-] (0,0) -- (rho0 \n);
				\draw[-] (0,0) -- (rhod \n);
				\draw node[anchor=south] at (rho0 \n) {$\scriptstyle (0,1)$};
				\draw node[anchor=north] at (rhod \n) {$\scriptstyle (dn^2,1-dna)$};
				\draw[densely dotted] (rho0 \n) -- (rhod \n);
			\end{scope}
		}
		\foreach \k in {1}{
			\pgfmathsetmacro{\m}{1/3*\k};
			\coordinate (rho\k-1) at ($(rho0 1)!\m!(rhod 1)$);
			\draw[dashed] (ScopeShift) -- (rho\k-1);
		}
		\coordinate (Cone 0) at ($(rho0 0)!0.38!(rhod 0)$);
		\draw node[anchor=south west] at (Cone 0) {$\scriptstyle \frac{1}{dn^2}(1,dna-1)$};
		\coordinate (Cone 1-1) at ($(rho0 1)!0.165!(rhod 1)$);
		\draw node[anchor=south west] at (Cone 1-1) {$\scriptstyle \frac{1}{n^2}(1,na-1)$};
		\coordinate (Cone 1-2) at ($(rho0 1)!0.66!(rhod 1)$);
		\draw node[anchor=south west] at (Cone 1-2) {$\scriptstyle \frac{1}{(d-1)n^2}(1,(d-1)na-1)$};
		\draw node[anchor=west] at (rho1-1) {$\scriptstyle (n^2,1-na)$};
	\end{tikzpicture}
	\caption{Partial resolution that splits $\frac{1}{dn^2}(1,dna-1)$ into $\frac{1}{n^2}(1,na-1)$ and $\frac{1}{(d-1)n^2}(1,(d-1)na-1)$}\label{fig: Partial M-resolution}
	\end{figure}
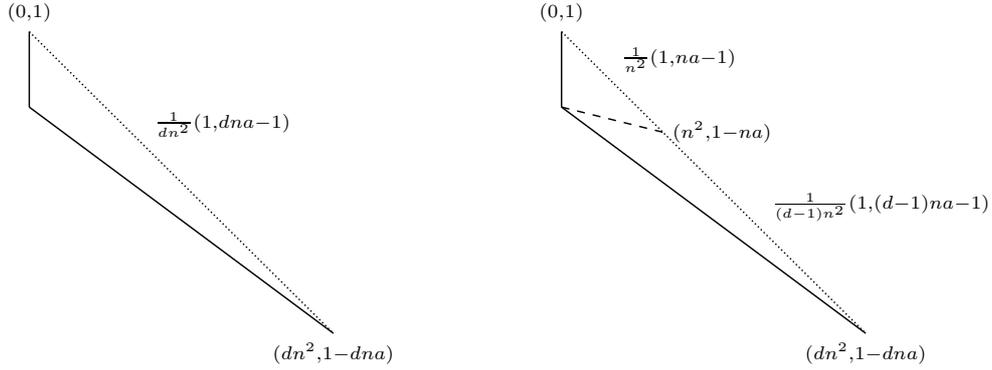
	The versal $\Q$-Gorenstein deformation $\mathcal X^{\sf ver} / (0 \in \Delta^d) $ of $(P \in X)$ is given in (\ref{eq: versal Q-Gor}). After a suitable base change $(0 \in \Delta^d) \to (0 \in \Delta^d)$, one has
	\[
		\mathcal X^{\sf ver}{}' = \bigl( xy = (z^n+t_1) ( z^{(d-1)n} + t_2 z^{(d-2)n} + \ldots + t_{d-1}z^n + t_d )\bigr) \subset \C^3 \Big/ \frac1n(1,-1,a) \times \Delta^d.
	\]
	Blowing up at the ideal $(x,z^n+t_1)$, one can deduce the following.
	\begin{proposition}\label{prop: versal simul M-res}
		Let $I = (x,z^n+t_1)$ be the ideal sheaf on $\mathcal X^{\sf ver}{}'$, and let $p \colon \mathcal Z = \op{Bl}_I \mathcal X^{\sf ver}{}' \to \mathcal X^{\sf ver}{}'$ be the blow up. Then,
		\begin{enumerate}
			\item for general $t \in \Delta^d$, $\mathcal Z_t \simeq \mathcal X_t^{\sf ver}{}'$;
			\item $p$ is an isomorphism in a neighborhood containing $\mathcal X_0^{\sf ver}{}' \setminus \{P\}$;
			\item $\mathcal Z_0$ is isomorphic to the toric variety associated to the fan $\Sigma_1$;
			\item the base space $( 0\in \Delta^d) $ admits the decomposition $(0 \in \Delta) \times (0 \in \Delta^{d-1})$ where the first (resp. second) factor parametrizes the versal $\Q$-Gorensteing deformation of $\frac{1}{n^2}(1,na-1) \in \mathcal Z_0$\ (resp. $\frac{1}{(d-1)n^2}(1,(d-1)na-1) \in \mathcal Z_0$).
		\end{enumerate}
	\end{proposition}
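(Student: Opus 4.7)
The plan is to compute $p$ in two explicit affine charts and to read off each assertion from the resulting equations. Writing $f_1 := z^n + t_1$ and $f_2 := z^{(d-1)n} + t_2 z^{(d-2)n} + \ldots + t_d$, the ideal $I = (x, f_1)$ is generated by two elements, and $\mathcal Z = \op{Bl}_I \mathcal X^{\sf ver}{}'$ has two affine charts: the $x$-chart, where $s := f_1/x$ is adjoined, and the $f_1$-chart, where $r := x/f_1$ is adjoined. Substituting into $xy = f_1 f_2$ and discarding the exceptional component yields the proper transforms
\[
    \text{$x$-chart:}\quad sx = z^n + t_1,\ y = s f_2;\qquad \text{$f_1$-chart:}\quad ry = z^{(d-1)n} + t_2 z^{(d-2)n} + \ldots + t_d,\ x = r f_1.
\]
The $\mu_n$-weights descend naturally: on $(x, z, s)$ as $\frac{1}{n}(1, a, -1)$ and on $(r, y, z)$ as $\frac{1}{n}(1, -1, a)$.

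For (a) and (b), I would pin down where $(x, f_1)$ fails to be locally principal on $\mathcal X^{\sf ver}{}'$. If $x$ or $f_1$ is a local unit, the ideal equals $(1)$; if $y$ is a local unit, the relation $xy = f_1 f_2$ forces $x \in (f_1)$ and hence $(x, f_1) = (f_1)$; if $f_2$ is a local unit, symmetrically $(x, f_1) = (x)$. Non-principality is therefore confined to the closed set $Z := \{x = y = f_1 = f_2 = 0\}$. At $t = 0$ we have $f_1 = z^n$ and $f_2 = z^{(d-1)n}$, so $Z \cap \mathcal X_0^{\sf ver}{}' = \{x = y = z = 0\} = \{P\}$, and $\mathcal X^{\sf ver}{}' \setminus Z$ is an open neighborhood of $\mathcal X_0^{\sf ver}{}' \setminus \{P\}$ on which $p$ is an isomorphism, proving (b). For (a), a general parameter $t$ avoids the discriminant locus where $f_1$ and $f_2$ share a root, so on $\mathcal X_t^{\sf ver}{}'$ one of $x, f_1, y, f_2$ is always a local unit, $(x, f_1)$ is principal on the entire slice, and $\mathcal Z_t \to \mathcal X_t^{\sf ver}{}'$ is an isomorphism.

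For (c), specializing the chart descriptions to $t = 0$ gives $(sx = z^n) \subset \C^3/\frac{1}{n}(1, a, -1)$ and $(ry = z^{(d-1)n}) \subset \C^3/\frac{1}{n}(1, -1, a)$. On the first, the toric parametrization $x = u^n,\ s = v^n,\ z = uv$ presents the surface as $\C^2_{u,v}/\mu_n$ with weights $(1, -1)$; lifting the ambient $\mu_n$ to the $(u, v)$-level by $u \mapsto \xi u,\ v \mapsto \xi^{na-1} v$ with $\xi^n = \zeta$ combines the two cyclic groups into a single $\mu_{n^2}$-action with weights $(1, na - 1)$, producing exactly $\frac{1}{n^2}(1, na-1)$. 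The symmetric calculation in the $f_1$-chart yields $\frac{1}{(d-1)n^2}(1, (d-1)na-1)$. The two charts are glued along the exceptional $\P^1$ (cut out by $x = 0$ in the $x$-chart and by $f_1 = 0$ in the $f_1$-chart), reproducing the toric surface associated to $\Sigma_1$.

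For (d), the two chart equations depend on complementary subsets of the parameters: the $x$-chart only on $t_1$ and the $f_1$-chart only on $(t_2, \ldots, t_d)$. Comparing with (\ref{eq: versal Q-Gor}) for indices $d = 1$ and $d - 1$ respectively, these are precisely the versal $\Q$-Gorenstein deformations of the two singularities of $\mathcal Z_0$, yielding the claimed product decomposition $(0 \in \Delta^d) = (0 \in \Delta_{t_1}) \times (0 \in \Delta^{d-1}_{(t_2, \ldots, t_d)})$. The main conceptual obstacle is the lattice/character bookkeeping in (c) — correctly pinning down the $\mu_{n^2}$-lift so that one recognizes the chart as $\frac{1}{n^2}(1, na-1)$; once this is settled, (d) follows immediately from the explicit chart equations.
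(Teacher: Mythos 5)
Your proposal is correct, but note that the paper does not actually prove this proposition: it declares the statement ``well-known'' and defers to Behnke--Christophersen \cite{BehnkeChristophersen:MResolution}. So your direct chart computation is a self-contained substitute for a citation rather than a variant of an argument in the text. It is the standard verification and all the key points check out: the weights of $s=f_1/x$ and $r=x/f_1$ are $-1$ and $+1$ as you say; the $x$-chart at $t=0$ is $(xs=z^n)\subset \C^3/\frac1n(1,-1,a)$, which by the paper's own description in \textsection 3 (the case $d=1$) is $\frac{1}{n^2}(1,na-1)$, and likewise the $f_1$-chart gives $\frac{1}{(d-1)n^2}(1,(d-1)na-1)$; the non-principality locus is contained in $\{x=y=f_1=f_2=0\}$, which meets the central fiber only in $P$ and maps into the resultant locus of $(f_1,f_2)$ in $\Delta^d$, giving (a) and (b); and the chart equations depend on the complementary parameter sets $\{t_1\}$ and $\{t_2,\ldots,t_d\}$, which after comparison with (\ref{eq: versal Q-Gor}) gives (d).

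Two small points deserve a sentence each if you write this up. First, $x$ has nonzero $\mu_n$-weight, so $(x,z^n+t_1)$ is not literally an ideal of functions on the quotient $\mathcal X^{\sf ver}{}'$; the construction should be read as the $\mu_n$-equivariant blow-up of the semi-invariant ideal upstairs followed by the quotient, which is exactly what your chart computation implements, but the identification should be made explicit. Second, in (c) the statement is not merely that $\mathcal Z_0$ has the two expected singularities glued along a $\P^1$, but that it is $\op{TV}(\Sigma_1)$ for the specific ray $\rho_1=\R_{>0}\cdot(n^2,1-na)$; the cleanest way to close this is to observe that at $t=0$ the ideal $(x,z^n)$ is torus-invariant, so $p_0$ is a toric birational morphism and hence corresponds to a subdivision of $\sigma$ by a single ray, and the two singularity types you computed force that ray to be $\rho_1$. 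With those two remarks added, your argument is complete.
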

	This is well-known in literature. We leave a reference\,\cite[\textsection 2]{BehnkeChristophersen:MResolution}, instead of giving a full proof.
	\begin{corollary}\label{cor: simul M-res}
		Let $\mathcal X / (0 \in \Delta)$ be a one parameter $\Q$-Gorenstein smoothing of $(P \in X) \simeq \frac{1}{dn^2}(1,dna-1)$. Then after a finite base change $(0 \in \Delta') \to (0 \in \Delta)$, there exists a proper birational morphism $p \colon \bigl(A \subset \mathcal Z\bigr) \to (P \in \mathcal X')$, where $\mathcal X' := \mathcal X \times_\Delta \Delta'$, such that $(A \subset \mathcal Z_0) \to X$ is the map $\op{TV}(\Sigma) \to \op{TV}(\sigma)$ between toric varieties described in the beginning of the section \ref{subsec: Main_ Local analysis}.
	\end{corollary}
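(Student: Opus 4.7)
The plan is to induct on $d$, using Proposition~\ref{prop: versal simul M-res} as the engine that peels off one $\frac{1}{n^2}(1,na-1)$ singularity at a time. The base case $d=1$ is trivial since then $\mathcal X/(0\in\Delta)$ is already the identity and $\Sigma=\sigma_1$. For the inductive step, suppose the corollary holds for all indices smaller than $d$.

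First I would localize around $P$: the germ $(P\in \mathcal X)/(0\in\Delta)$ is a one parameter $\Q$-Gorenstein deformation of $(P\in X)\simeq \frac{1}{dn^2}(1,dna-1)$, so by versality of $\mathcal X^{\sf ver}/(0\in\Delta^d)$ there is a classifying map $\phi\colon (0\in\Delta)\to (0\in\Delta^d)$ with $(P\in \mathcal X)\simeq \phi^*\mathcal X^{\sf ver}$. Next I would perform the finite base change on $\Delta$ coming from Proposition~\ref{prop: versal simul M-res}: the identification
\[
\mathcal X^{\sf ver}{}' = \bigl(xy = (z^n + t_1)(z^{(d-1)n} + t_2 z^{(d-2)n} + \ldots + t_d)\bigr)\big/\tfrac1n(1,-1,a)
\]
requires extracting roots of the polynomial $z^{dn}+t_{d-1}z^{(d-1)n}+\ldots+t_0$, which corresponds to a finite branched cover $(0\in\Delta^d)\to(0\in\Delta^d)$. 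Pulling $\phi$ back through this cover produces a finite base change $(0\in\Delta')\to(0\in\Delta)$ such that, on $\mathcal X':=\mathcal X\times_\Delta \Delta'$, the local equation near $P$ takes the factored form above.

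Now I would blow up the ideal sheaf $I=(x, z^n+t_1)$ on $\mathcal X'$ near $P$. By Proposition~\ref{prop: versal simul M-res}, the resulting morphism $p_1\colon \mathcal Z^{(1)}\to \mathcal X'$ is an isomorphism away from $P$, does not alter the general fiber, and replaces the central fiber locally by the toric variety $\op{TV}(\Sigma_1)$ obtained by adjoining the ray $\rho_1=\R_{>0}\cdot(n^2,1-na)$; in particular $\mathcal Z_0^{(1)}$ acquires exactly two quotient singularities, one of type $\frac{1}{n^2}(1,na-1)$ and one of type $\frac{1}{(d-1)n^2}(1,(d-1)na-1)$. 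Moreover, by part (d) of the same proposition, a neighborhood of the latter singularity in $\mathcal Z^{(1)}$ is itself a one parameter $\Q$-Gorenstein deformation of $\frac{1}{(d-1)n^2}(1,(d-1)na-1)$ (the parameter being inherited from the second factor of the local product decomposition of $\Delta'$), and since the general fiber of $\mathcal X'$ is smooth it is in fact a $\Q$-Gorenstein smoothing. Applying the inductive hypothesis to this germ yields, after a further finite base change, a proper birational morphism whose central fiber realizes the subsequent subdivisions by $\rho_2,\ldots,\rho_{d-1}$.

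Composing $p_1$ with the map furnished by induction and absorbing the two base changes into a single finite base change $(0\in\Delta')\to(0\in\Delta)$ gives the desired $p\colon (A\subset \mathcal Z)\to (P\in\mathcal X')$. The central fiber check is then automatic: stepwise, each blow up glues a single toric chart obtained by adding one ray $\rho_k$, so the composite central fiber map is precisely $\op{TV}(\Sigma)\to \op{TV}(\sigma)$, with $A$ the union of the exceptional toric divisors. The main bookkeeping obstacle is the passage from the local analytic picture near $P$ to a global birational modification of $\mathcal X'$; this is harmless because $p_1$ is an isomorphism outside a neighborhood of $P$, so one may glue $\mathcal Z^{(1)}$ to $\mathcal X'\setminus\{P\}$ along the common open, and similarly at each inductive step.
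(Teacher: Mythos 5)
Your proposal is correct and matches the derivation the paper intends: the paper itself gives no proof of this corollary, presenting it as an immediate consequence of Proposition~\ref{prop: versal simul M-res} (pull back the versal picture along the classifying map after the root-extracting base change, then iterate the blow-up of $(x,z^n+t_1)$ to peel off one $\frac{1}{n^2}(1,na-1)$ at a time, exactly as in the description of $\Sigma_1$ and its subsequent subdivisions at the start of \textsection\ref{subsec: Main_ Local analysis}). Your induction on $d$, together with the observation that the modification is local at $P$ and glues to $\mathcal X'\setminus\{P\}$, is precisely this argument spelled out.
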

	We have $d\times \frac{1}{n^2}(1,na-1)$ in $\mathcal Z_0$ to which we apply Corollary~\ref{cor: Hacking V.B.} to construct $d$ exceptional vector bundles on the general fiber. Let $P_1,\ldots,P_d \in \mathcal Z_0$ be the singular points. They correspond to the maximal cones in the fan $\Sigma$. Let $Y \to \mathcal Z_0$ be a resolution of $P_1,\ldots,P_d$ as described in Figure~\ref{fig: min resolution of M-resolution}. The exceptional locus consists of the curves $\{E_{ij} : 1 \leq i \leq d,\ 1 \leq j \leq r\}$, where each $\{E_{ij} : 1 \leq j \leq r\}$ is the chain of rational curves contracted down to $P_i$. Also, for $k=1,\ldots,d-1$, there are proper transforms $\tilde A_k$ of the curves $A_k := \div \rho_k$ associated to $\rho_k = \R_{>0} \cdot (kn^2, 1-kna)$.
	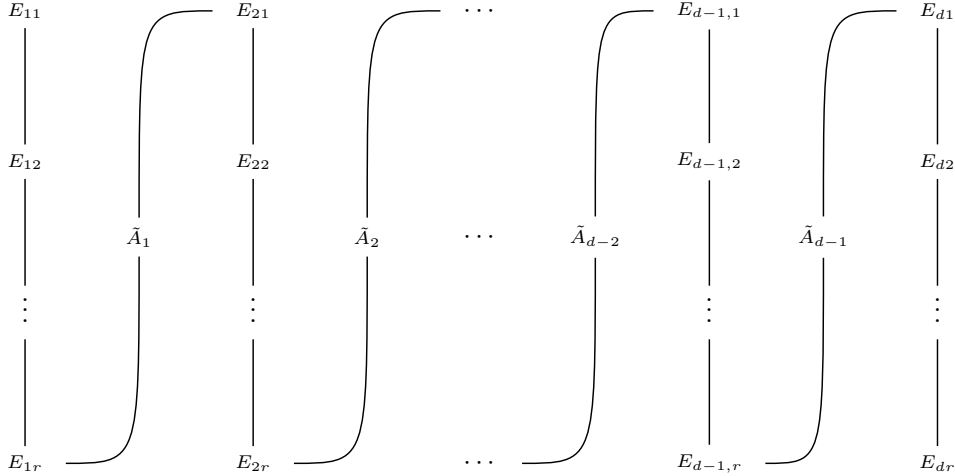
\begin{figure}[h!]
	\centering
	\begin{tikzpicture}
		\pgfmathsetmacro{\xinterval}{3}
		\pgfmathsetmacro{\yinterval}{2}
		\draw node at (0,\yinterval*1.6) {};
		\foreach \i in {1,2,3,4,5}{
			\foreach \j in {1,2,3,4}{
				\pgfmathsetmacro{\coordi}{(\i-3)*\xinterval}
				\pgfmathsetmacro{\coordj}{(2.5-\j)*\yinterval}
				\coordinate (coordE\i\j) at (\coordi,\coordj);
			}
		}
		\foreach \i/\indexi in {1/1,2/2,4/{d-1,},5/d}{
			\foreach \j in {1,2}{
				\draw node (E\i\j) at (coordE\i\j) {$\scriptstyle E_{\indexi\j}$};
			}
			\draw node (E\i3) at (coordE\i3) {\raisebox{0pt}[10pt][3pt]{$\vdots$}};
			\draw node (E\i4) at (coordE\i4) {$\scriptstyle E_{\indexi r}$};
		}
		\draw node (E31) at (coordE31) {$\cdots$};
		\draw node (E34) at (coordE34) {$\cdots$};
		\draw node at ($(coordE31)!0.5!(coordE34)$) {$\cdots$};
		\foreach \i/\indexi in {1/1,2/2,3/{d-2},4/{d-1}}{
			\pgfmathtruncatemacro{\m}{\i+1}
			\coordinate (coordA\i) at ($(coordE\i1)!0.5!(coordE\m4)$);
			\draw node (A\i) at (coordA\i) {$\scriptstyle \tilde A_{\indexi}$};
			\draw[-] ([xshift=5pt]E\i4.east) .. controls ($(E\i4)!0.5!(E\m4)$) .. (A\i.south);
			\draw[-] (A\i.north) .. controls ($(E\i1)!0.5!(E\m1)$) .. ([xshift=-5pt]E\m1.west);
		}
		\foreach \i in {1,2,4,5}{
			\foreach \j in {1,2,3}{
				\pgfmathtruncatemacro{\jj}{\j+1}
				\draw[-] (E\i\j) -- (E\i\jj);
			}
		}
	\end{tikzpicture}
	\caption{The dual intersection graph of the resolution $Y \to \mathcal Z_0$.}\label{fig: min resolution of M-resolution}
	\end{figure}
	Let $L_{P_i} \subset \mathcal Z_0$ be the link of $(P_i \in \mathcal Z_0)$. Then $H_1(L_{P_i};\Z)$ is generated by the loops $\alpha_{i1},\ldots,\alpha_{ir}$ around $E_{i1},\ldots,E_{ir}$, respectively. We consider the product of the maps in (\ref{eq: Div Cycle map}):
	\begin{equation}\label{eq: M-resolution Cycle map}
		\gamma \colon \Cl \mathcal Z_0 \to \bigoplus_{i=1}^d H_1(L_{P_i};\Z).
	\end{equation}
	\subsection{Application to the global case}
		This subsection treats the proof of the main theorem:
	\begin{theorem}\label{thm: Main thm}
		Let $X$ be a normal projective surface with $H^1(\mathcal O_X)=H^2(\mathcal O_X)=0$, and let $(P \in X)$ be a singular point $\frac{1}{dn^2}(1,dna-1)$ of class T (or $A_{d-1}$ in the case $n=a=1$). Assume that there exists a divisor $D \in \Cl X$ such that for each $Q \in \op{Sing} X$ and $\gamma_Q \colon \Cl X \to H_1(L_Q;\Z)$,
		\begin{enumerate}[label={\normalfont (\arabic{enumi})}]
			\item $\gamma_P(D)$ generates $H_1(L_P;\Z)/n^2$;
			\item $\gamma_Q(D) \in n_Q \cdot H_1(L_Q;\Z)$ if $Q$ is a Wahl singularity with indices $(n_Q,a_Q)$;
			\item $\gamma_Q(D) = 0$ otherwise.
		\end{enumerate}
		Let $\mathcal X / (0 \in \Delta)$ be a one parameter $\Q$-Gorenstein smoothing of $X$. Then, after a finite base change $(0 \in \Delta') \to (0 \in \Delta)$, there exist reflexive sheaves $\mathcal E_1,\ldots,\mathcal E_d$ over $\mathcal X' := \mathcal X \times_\Delta \Delta'$ such that
		\begin{enumerate}
			\item $\mathcal E_k$ is a reflexive sheaf of rank $n$, locally free over $\mathcal X' \setminus \op{Sing} \mathcal X_0'$;
			\item $\mathcal E_{k,0}^{\vee\vee} \simeq \mathcal O_X( m D )$ for some $m > 0$ independent of $k$;
			\item for general $t$, $\mathcal E_{k,t}$ is an exceptional vector bundle of rank $n$;
			\item $\langle\, \mathcal E_{1,t},\ldots,\mathcal E_{d,t}\, \rangle \subset \D(\mathcal X_t')$ is an orthogonal collection.
		\end{enumerate}
	\end{theorem}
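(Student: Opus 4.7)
The plan is to combine Corollary~\ref{cor: simul M-res} with Corollary~\ref{cor: Hacking V.B.}. After a first finite base change I apply the simultaneous M-resolution to produce a proper birational morphism $p\colon \mathcal Z \to \mathcal X'$ whose central fiber $p_0\colon \mathcal Z_0 \to X$ partially resolves $(P\in X)$ into $d$ Wahl singularities $P_1,\ldots,P_d$ of type $\frac{1}{n^2}(1,na-1)$ linked by a chain of smooth rational curves $\tilde A_1,\ldots,\tilde A_{d-1}$ as in \textsection\ref{subsec: Main_ Local analysis}, leaving the remaining singularities $Q_1,\ldots,Q_s$ of $X$ untouched. The family $\mathcal Z/(0\in\Delta')$ is then itself a $\Q$-Gorenstein smoothing of $\mathcal Z_0$ whose general fiber is identified with $\mathcal X'_t$.

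For each $k\in\{1,\ldots,d\}$ I construct a Weil divisor
\[
    \tilde D_k \;=\; m\,\tilde D \;+\; \sum_{j=1}^{d-1} b_{k,j}\,\tilde A_j \;\in\; \Cl \mathcal Z_0,
\]
with $\tilde D$ the proper transform of $D$ and $b_{k,j}\in\Z$, such that (i) $\gamma_{P_k}(\tilde D_k) = \alpha_{k,1}$, the link generator attached to $E_{k,1}$; (ii) $\gamma_{P_i}(\tilde D_k) \in n\cdot H_1(L_{P_i};\Z)$ for $i\neq k$; (iii) $\gamma_{Q_j}(\tilde D_k) \in n_{Q_j}\cdot H_1(L_{Q_j};\Z)$ for each Wahl singularity $Q_j\subset X\setminus\{P\}$. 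Item (iii) is immediate from hypothesis (2) of the theorem since the $\tilde A_j$ do not meet the $Q_j$. For (i)--(ii), I use $\gamma_{P_k}(\tilde A_k) = \alpha_{k,r}$, $\gamma_{P_{k+1}}(\tilde A_k) = \alpha_{k+1,1}$ (cf.\ Figure~\ref{fig: min resolution of M-resolution}) together with the congruence $\alpha_{k,r} \equiv -(na+1)\alpha_{k,1} \pmod{n^2}$ from Proposition~\ref{prop: Link algebra}, so the conditions become a system of linear congruences in the $b_{k,j}$ that is triangular mod $n$, with a single mod-$n^2$ equation at position $k$. The hypothesis that $\gamma_P(D)$ generates $H_1(L_P;\Z)/n^2$ supplies the unit residue required at position $k$; choosing $m$ appropriately then makes the system solvable in integers. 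Applying Corollary~\ref{cor: Hacking V.B.} to each pair $(\mathcal Z_0,\tilde D_k)$ and passing to a common further base change yields reflexive sheaves $\mathcal E_k$ on $\mathcal X'$ satisfying conclusions (a)--(c), with (b) coming from $(p_0)_*\mathcal O_{\mathcal Z_0}(\tilde D_k)=\mathcal O_X(mD)$ together with Theorem~\ref{thm: Hacking}(a).

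The crux is proving orthogonality (d). Although the double duals $\mathcal E_{k,0}^{\vee\vee}$ coincide for every $k$, the first Chern classes $c_1(\mathcal E_{k,t})$ on the general fiber differ pairwise, with differences lying in the kernel of the specialization map $H_2(\mathcal X'_t;\Q)\to H_2(X;\Q)$; this kernel is nonzero here thanks to $H_2(\mathcal B_t;\Z)\simeq\Z^{d-1}$ for the class T Milnor fiber, contrasting the Wahl case of Remark~\ref{rmk: comparing intersection theories}. Fixing a relatively ample polarization $\mathcal H$ on $\mathcal X'$ whose specialization is ample on $X$, the Milnor fiber cycles are orthogonal to $\mathcal H_t$ because they are homologous to the exceptional $\tilde A_j$'s, which are contracted by $p_0$; hence all $\mathcal E_{k,t}$ share a common slope and slope stability (Theorem~\ref{thm: Hacking}(d)) forces $\Hom(\mathcal E_{k,t},\mathcal E_{\ell,t}) = 0$ for $k\neq\ell$. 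The vanishing of $\Ext^2$ follows from Serre duality and a parallel slope comparison, after which $\Ext^1 = 0$ reduces to a Riemann--Roch calculation giving $\chi(\mathcal E_{k,t},\mathcal E_{\ell,t}) = 0$. I expect the hardest technical step to be this final Chern-character and Euler-characteristic accounting: one must assemble the restrictions of $\mathcal E_k$ to the components $\tilde X_0$, $W_k$ and $W_i$ ($i\neq k$) of the Hacking blow-up into a global Chern character on the general fiber and extract the numerical identity $\chi = 0$ from the Milnor-fiber combinatorics encoded in the $\tilde D_k$.
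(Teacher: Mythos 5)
Your overall architecture matches the paper's: simultaneous M-resolution via Corollary~\ref{cor: simul M-res}, adjustment of $mD$ by integer combinations of the exceptional curves $A_j$ to meet the link conditions at each $P_i$ (the paper takes $D_k = D_0 + A_1 + \cdots + A_k$ with $\gamma(D_k) = (na\,\alpha_{1r},\ldots,na\,\alpha_{kr},\alpha_{k+1,1},0,\ldots,0)$, exactly the triangular system you describe), then Corollary~\ref{cor: Hacking V.B.} at each Wahl point, and finally orthogonality split into $\chi=0$, $\Hom=0$, $\Ext^2=0$. Your $\chi=0$ step and the observation that $c_1(\mathcal E_{k,t})-c_1(\mathcal E_{\ell,t})$ lies over the contracted curves are also as in the paper. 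However, there are two genuine gaps in the orthogonality argument. First, for $\Hom=0$ you invoke Theorem~\ref{thm: Hacking}(d) to get stability with respect to $\mathcal H_t$, but the bundles are constructed over $\mathcal Z$, and the pullback of $\mathcal H$ to $\mathcal Z$ is only relatively \emph{nef}; taking a limit of ample classes yields only \emph{semi}stability, and equal slope plus semistability does not force $\Hom=0$. The paper's Proposition~\ref{prop: Stability} closes this by switching to the polarization $\mathcal A = K_{\mathcal X'}+(nm)\mathcal H$ and showing $(\mathcal A_t\mathbin. c_1(\mathcal F_{k,t}))\equiv -a \pmod n$ is coprime to the rank $n$, so that semistability upgrades to stability. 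This coprimality computation is a necessary ingredient missing from your sketch.

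Second, and more seriously, your proposed proof of $\Ext^2=0$ by ``Serre duality and a parallel slope comparison'' does not work. Serre duality reduces the claim to $\Hom(\mathcal F_{k,t},\,\mathcal F_{\ell,t}\otimes\omega)=0$, and since the two bundles have equal slope and $(K\mathbin.c_1)=0$, a slope obstruction would require $(K_{\mathcal X'_t}\mathbin.\mathcal A_t)<0$; but the theorem assumes only $H^1(\mathcal O_X)=H^2(\mathcal O_X)=0$, so $X$ need not be del Pezzo and $K$ may well be ample. The paper instead proves $\Ext^2=0$ by a degeneration argument: it forms $\mathcal G = \bigl(p_*(\mathcal F_k\otimes\mathcal F_\ell^\vee\otimes\omega_{\mathcal Z/\Delta})\bigr)^{\vee\vee}$, shows $\mathcal G_0\hookrightarrow(\mathcal G_0)^{\vee\vee}\simeq\omega_X^{\oplus n^2}$ so that $h^0(\mathcal G_0)\le n^2 h^0(\omega_X)=0$ (using $H^2(\mathcal O_X)=0$), and then runs a nontrivial semicontinuity argument for the non-flat sheaf $\mathcal G$ (via $\mathbf{R}\varphi_*\mathcal G\Lotimes\mathcal O_t$ and a Tor spectral sequence) to conclude $H^0(\mathcal G_t)=0$. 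Without this, or some substitute such as an extra positivity hypothesis on $-K_X$, your argument for conclusion (d) is incomplete.
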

	After a base change $\mathcal X' = \mathcal X \times_\Delta \Delta'$, we have a map $p \colon \mathcal Z \to \mathcal X'$ which extends $( A_1 \cup \ldots \cup A_{k-1} \subset \mathcal Z_0) \to (P \in \mathcal X')$ in Corollary~\ref{cor: simul M-res}.
	\begin{lemma}
		Assume the hypotheses of Theorem~\ref{thm: Main thm}. Let $P_1,\ldots,P_d \in \mathcal Z_0$ be the Wahl singularities $\frac{1}{n^2}(1,na-1)$ lying over $(P \in X)$, let $\gamma \colon \Cl \mathcal Z_0 \to \bigoplus_{i=1}^d H_1(L_{P_i};\Z)$ be as in (\ref{eq: M-resolution Cycle map}), and let $D' \in \Cl \mathcal Z_0$ be the proper transform of $D$. Then, there exist integers $m,a_1,\ldots,a_{d-1}$ such that $D_0 := mD'+\sum_{k} a_k A_k$ satisfies
		\[
			\gamma(D_0) = (\alpha_{11},0,0,\ldots,0),
		\]
		where $\alpha_{11}$ is the generator corresponding to $E_{11}$ in Figure~\ref{fig: min resolution of M-resolution}.
	\end{lemma}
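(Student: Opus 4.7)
The strategy is to make everything concrete in coordinates and reduce the lemma to a single congruence modulo $n^2$. The proper transform $\tilde A_k \subset Y$ meets exactly the two end curves $E_{k,r}$ and $E_{k+1,1}$ of the adjacent Wahl chains, each with intersection number $1$. Applying Proposition~\ref{prop: Link algebra} to each individual link, with $n'_s := \det A(\bar b_1,\ldots,\bar b_{s-1})$ and in particular $n'_r \equiv q' \equiv -(na+1) \pmod{n^2}$, one finds that the $i$-th component of $\gamma(A_k)$ is $q' \alpha_{k,1}$ when $i=k$, is $\alpha_{k+1,1}$ when $i=k+1$, and vanishes otherwise. Writing $\gamma(D') = (v_1 \alpha_{1,1},\ldots,v_d \alpha_{d,1})$, the desired identity $\gamma(mD' + \sum_k a_k A_k) = (\alpha_{1,1},0,\ldots,0)$ becomes a linear system in $(\Z/n^2\Z)^d$ whose last $d-1$ rows determine $a_{d-1},\ldots,a_1$ successively in terms of $m$. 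Back-substitution collapses the system to the single congruence
\[
    m \cdot S \equiv 1 \pmod{n^2}, \qquad S := \sum_{j=1}^{d} (-q')^{j-1} v_j,
\]
so the lemma follows once $S$ is shown to be a unit in $\Z/n^2\Z$.

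The crux is to identify $S$ with the class of $\gamma_P(D)$ modulo $n^2$. Let $F_1,\ldots,F_{dr+d-1}$ denote the exceptional chain of the full resolution $Y \to X$ of Figure~\ref{fig: min resolution of M-resolution}, with $F_{(i-1)(r+1)+s} = E_{i,s}$ and $F_{k(r+1)} = \tilde A_k$, and set $n_j := \det A(b_1,\ldots,b_{j-1})$, so that $\beta_j = n_j \beta_1$ in $H_1(L_P;\Z)$. I will prove by induction on the block index $i$ that
\[
    n_{(i-1)(r+1)+s} \equiv (-q')^{i-1}\, n'_s \pmod{n^2} \qquad \text{and} \qquad n_{k(r+1)} \equiv 0 \pmod{n^2}.
\]
The induction runs on the universal recurrence $n_{j+1} = b_j n_j - n_{j-1}$ together with the identity $n'_{r+1} = n^2$ from Proposition~\ref{prop: Link algebra}: the transition across each connecting curve $\tilde A_k$ multiplies the residue pattern by $-q'$ irrespective of $B_k := -\tilde A_k^2$, because the $B_k$-contribution is absorbed by $n_{k(r+1)} \equiv 0$. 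Substituting these congruences into $\gamma_P(D) = \bigl(\sum_j (D_Y . F_j)\, n_j\bigr)\beta_1$ and matching term-by-term with $v_i = \sum_s (D_Y . E_{i,s})\, n'_s$ yields $\gamma_P(D) \equiv S\, \beta_1 \pmod{n^2}$; in particular the intersections $D_Y . \tilde A_k$ drop out.

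The hypothesis that $\gamma_P(D)$ generates $H_1(L_P;\Z)/n^2 \simeq \Z/n^2\Z$ is now precisely the assertion that $S$ is a unit, which supplies the required $m$ and hence all the $a_k$. The main obstacle I anticipate is the inductive identification of $n_{(i-1)(r+1)+s}$ modulo $n^2$, and specifically the bridging step across each $\tilde A_k$; the argument hinges on the Wahl-chain identity $n'_{r+1} = n^2$, which makes the recursion modulo $n^2$ insensitive to the self-intersection of the connecting curve and simply resets the Wahl pattern up to the factor $-q'$.
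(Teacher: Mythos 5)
Your argument is correct, and the two computations you flag as the crux both check out: the back-substitution does collapse the system to the single congruence $mS\equiv 1\pmod{n^2}$ with $S=\sum_{j}(-q')^{j-1}v_j$, and the block congruences $n_{(i-1)(r+1)+s}\equiv(-q')^{i-1}n'_s$, $n_{k(r+1)}\equiv 0\pmod{n^2}$ follow by induction from the three-term recurrence of Proposition~\ref{prop: Link algebra} together with $\det A(\bar b_1,\ldots,\bar b_r)=n^2$, exactly because the self-intersection $-B_k$ of the connecting curve $\tilde A_k$ only ever multiplies a quantity that is $0$ mod $n^2$. Your route is, however, substantially more computational than the paper's, which works entirely on the full resolution $Y\to X$ and lets Proposition~\ref{prop: divisors having same description in the link} do all the linear algebra: since the generator $\beta_1$ attached to $\tilde A_1$ equals $n^2\alpha_{11}$, the hypothesis that $\gamma_P(D)$ generates $H_1(L_P;\Z)/n^2$ immediately gives $\gamma_P(mD)=\alpha_{11}+c\beta_1$ for some integers $m,c$, i.e.\ an integral representation of $\gamma_P(mD)$ with coefficient $1$ on $\alpha_{11}$ and $0$ on every other $\alpha_{ij}$; Proposition~\ref{prop: divisors having same description in the link} (negative definiteness of the exceptional intersection matrix) then produces $D_0''=mD''+\sum_k a_kA_k+\sum_{i,j}e_{ij}E_{ij}$ realizing exactly those intersection numbers, and pushing forward to $\mathcal Z_0$ contracts the $E_{ij}$ and yields $D_0$. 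That argument bypasses both the explicit solution of the system and the congruence induction. What your version buys is an explicit identification of the obstruction --- the unit $S$ pins down which residue class mod $n^2$ is being inverted, and makes visible that the intersections $(D_Y.\tilde A_k)$ and the integers $B_k$ are irrelevant --- at the cost of having to justify two points the paper's appeal to Propositions~\ref{prop: divisors having same description in the link} and~\ref{prop: Link algebra} absorbs silently: that the $d$ Wahl chains in Figure~\ref{fig: min resolution of M-resolution} are consistently oriented (so that each $\tilde A_k$ meets $E_{k,r}$ and $E_{k+1,1}$ and the same $q'=\det A(\bar b_1,\ldots,\bar b_{r-1})$ governs every block; this follows from the self-similarity of the toric fan $\Sigma$ but should be stated), and that the residues $m,a_1,\ldots,a_{d-1}$ solved mod $n^2$ are lifted to actual integers defining the divisor $D_0$.
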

	\begin{proof}
		Let $Y \to \mathcal Z_0$ be a resolution as in Figure~\ref{fig: min resolution of M-resolution}, and let $\beta_k \in H_1(L_P;\Z)$ be the generator corresponding to $\tilde A_{k}$. By Proposition~\ref{prop: Link algebra}, $\beta_1 = n^2 \alpha_{11}$ in $H_1(L_P;\Z)$. Thus, there exists $c \in \Z$ such that
		\[
			\gamma_P(mD) = (1 + c n^2) \alpha_{11} = \alpha_{11} + c \beta_1,
		\]
		where $\gamma_P \colon \Cl X \to H_1(L_P;\Z)$. Let $D'' \in \Cl Y$ Be the proper transform of $D$ along the composition $Y \to \mathcal Z_0 \to X$. Then, by Proposition~\ref{prop: divisors having same description in the link}, there are integers $\{a_k\}, \{e_{ij}\}$ such that
		\[
			D_0'':= mD'' + \sum_k a_k A_k + \sum_{i,j} e_{ij} E_{ij}
		\]
		satisfies $(D_0'' . E_{11})=1$ and $(D_0''.E_{ij})=0$ for $(i,j) \neq (1,1)$. The push-forward of $D_0''$ along $Y \to \mathcal Z_0$ is the desired $D_0$ in the statement.
	\end{proof}
	Let $D_k := D_0 + A_1 + \ldots + A_k$\,($k<d$). Then by Proposition~\ref{prop: Link algebra}
	\begin{align*}
		\gamma(D_i) &= \bigl( \alpha_{11} + \alpha_{1r},\, \ldots,\, \alpha_{k1} + \alpha_{kr},\, \alpha_{k+1,1}, \, 0,\,\ldots,\,0 \bigr)
		\\
		&= \bigl( na\cdot\alpha_{1r},\, \ldots,\,na\cdot\alpha_{kr},\, \alpha_{k+1,1},\, 0,\,\ldots,\,0 \bigr).
	\end{align*}
	Hence, the hypotheses of Corollary~\ref{cor: Hacking V.B.} are satisfied when one plugs $P \mapsto P_{k+1}$ and $D_0 \mapsto D_k$ into the corollary. This yields an reflexive sheaf $\mathcal F_k$ over $\mathcal Z$\,(after a base change, but we suppress it in our notation), whose restriction to the general fiber is an exceptional vector bundle $\mathcal F_{k,t}$. We claim that $\mathcal E_k = (p_* \mathcal F_k)^{\vee\vee}$ is the desired one in Theorem~\ref{thm: Main thm}. We need to verify
	\begin{equation}\label{eq: Orthogonality}
		\Ext^p(\mathcal F_{k,t},\, \mathcal F_{\ell,t})=0,\quad \forall p,\ \forall k\neq \ell,
	\end{equation}
	for general $t$.
%
	\begin{proposition}
		$\chi( \mathcal F_{k,t},\, \mathcal F_{\ell,t}) = \sum_p (-1)^p \dim \Ext^p(\mathcal F_{k,t},\, \mathcal F_{\ell,t})=0$.
	\end{proposition}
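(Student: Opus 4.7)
The plan is to compute $\chi(\mathcal F_{k,t},\mathcal F_{\ell,t})$ by Hirzebruch--Riemann--Roch on the smooth general fiber, reduce the resulting intersection numbers to $\mathcal Z_0$ via the specialization map, and then evaluate them using the toric data of \S\ref{subsec: Main_ Local analysis}.

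First I would expand
\[
    \chi(\mathcal F_{k,t},\mathcal F_{\ell,t}) = \int_{\mathcal X_t'} \op{ch}(\mathcal F_{k,t}^{\vee})\,\op{ch}(\mathcal F_{\ell,t})\,\op{td}(\mathcal X_t'),
\]
and use $\chi(\mathcal O_{\mathcal X_t'}) = \chi(\mathcal O_X) = 1$ (constant in the flat family), together with the exceptionality relations $\chi(\mathcal F_{k,t},\mathcal F_{k,t}) = \chi(\mathcal F_{\ell,t},\mathcal F_{\ell,t}) = 1$, to solve for and eliminate $c_2(\mathcal F_{k,t})$ and $c_2(\mathcal F_{\ell,t})$ from the formula. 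A direct rank-$n$ Chern-character calculation (which I would not spell out) then yields the clean expression
\[
    \chi(\mathcal F_{k,t},\mathcal F_{\ell,t}) = 1 + \tfrac{1}{2}\,M\cdot\bigl(M + nK_{\mathcal X_t'}\bigr),\qquad M := c_1(\mathcal F_{k,t}) - c_1(\mathcal F_{\ell,t}).
\]

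Next, assume $k<\ell$ (the case $k>\ell$ follows identically, since the only asymmetric term will disappear once $B\cdot K_{\mathcal Z_0}$ vanishes below), and set $B := A_{k+1}+\ldots+A_\ell \in \Cl\mathcal Z_0$. By Corollary~\ref{cor: Hacking V.B.}(c) applied to the construction of each $\mathcal F_j$, the specialization map satisfies $\op{sp}(c_1(\mathcal F_{j,t})) = c_1(nD_j)$, so $\op{sp}(M) = -n\cdot c_1(B)$. The surface $\mathcal Z_0$ has singular set precisely $\{P_1,\ldots,P_d\}$, consisting only of Wahl points, so $\mathcal Z/(0\in\Delta')$ falls under Remark~\ref{rmk: comparing intersection theories}: specialization is an isomorphism $H_2(\mathcal X_t';\Q) \to H_2(\mathcal Z_0;\Q)$ preserving the intersection pairing. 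Consequently
\[
    M\cdot(M+nK_{\mathcal X_t'}) = n^2\bigl(B^2 - B\cdot K_{\mathcal Z_0}\bigr),
\]
and the proposition reduces to the two identities $B^2 = -2/n^2$ and $B\cdot K_{\mathcal Z_0} = 0$.

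Both identities are local on the toric partial resolution $p\colon\mathcal Z_0\to X$ at $P$. The crucial combinatorial input is the linear relation $v_{j-1}+v_{j+1} = 2v_j$ between consecutive primitive ray generators $v_k = (kn^2,\,1-kna)$. This relation says that $v_1,\ldots,v_{d-1}$ lie on the hyperplane $\psi_{K_X}\equiv 1$ spanned by $v_0$ and $v_d$, so the toric discrepancies of the exceptional divisors $A_j=D_{v_j}$ vanish; equivalently, the M-resolution $p$ is crepant, $K_{\mathcal Z_0} = p^*K_X$, and hence $K_{\mathcal Z_0}\cdot A_j = 0$ for every $j$, giving $B\cdot K_{\mathcal Z_0} = 0$. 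For the self-intersection, I would intersect the vector-valued toric linear equivalence $\sum_k v_k\cdot D_{v_k}\equiv 0$ with $A_j$ and use the standard adjacency intersections $A_i\cdot A_j = 1/n^2$ for $|i-j|=1$ (and $0$ otherwise) to obtain $v_j\cdot A_j^2 = -(v_{j-1}+v_{j+1})/n^2 = -2v_j/n^2$, hence $A_j^2 = -2/n^2$. Summing the $\ell-k$ self-intersections and $\ell-k-1$ adjacent cross terms then yields $B^2 = -2/n^2$, so $\chi = 1 + \tfrac{1}{2}(-2) = 0$. The main obstacle is not conceptual but careful bookkeeping of the toric intersection theory on the singular surface $\mathcal Z_0$; once the two numerical identities are in hand, the rest is mechanical.
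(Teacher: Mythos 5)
Your proof is correct and follows essentially the same route as the paper's: Hirzebruch--Riemann--Roch on the general fiber, transfer of the intersection numbers to $\mathcal Z_0$ via the specialization map of Remark~\ref{rmk: comparing intersection theories}, and the toric identities $A_j^2=-2/n^2$, $A_i\cdot A_{i+1}=1/n^2$, $K_{\mathcal Z_0}\cdot A_j=0$. Your closed form $\chi=1+\tfrac12\,M\cdot(M+nK)$, obtained by eliminating the second Chern classes through $\chi(\mathcal F_{k,t},\mathcal F_{k,t})=1$, is just a cleaner packaging of the paper's explicit verification that $c_2(\mathcal F_{k,t}^\vee\otimes\mathcal F_{\ell,t})=0$, and your toric justifications (crepancy from $v_{j-1}+v_{j+1}=2v_j$, intersecting the toric linear equivalences with $A_j$) supply details the paper only asserts.
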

	\begin{proof}
		Since the major part of the proof involves tedious numerical computations, we give an outline instead of giving all the details. By Riemann-Roch formula, $\chi(\mathcal F_{k,t},\, \mathcal F_{\ell,t})$ is determined by Chern classes of $\mathcal F_{k,t}^\vee \otimes \mathcal F_{\ell,t}$ and their intersection products. By Remark~\ref{rmk: comparing intersection theories}, it suffices to look at their specialization. We have $\op{sp}(c_1(\mathcal F_{i,t})) = n( D_0 + A_1 + \ldots + A_{i-1})$, hence
		\[
			\op{sp}(c_1( \mathcal F_{k,t}^\vee \otimes \mathcal F_{\ell,t}) ) = \left\{
				\begin{array}{ll}
					n^2( A_k + \ldots + A_{\ell-1} ) & \text{if }\ell > k \\
					-n^2( A_\ell + \ldots + A_{k-1} ) & \text{if } \ell < k.
				\end{array}
			\right.
		\]
		We remark that the curves $A_1,\ldots,A_{d-1}$ are exceptional curves of $\mathcal Z_0 \to X$, whose intersection properties are completely determined by the local geometry at $(P \in X)$. Using toric geometry, one may verify
		\[
			(A_i)^2 = -\frac{2}{n^2},\ (A_i . A_{i+1}) = \frac{1}{n^2},\ \text{and\ } (K_{\mathcal Z_0} . A_i)=0.
		\]
		This determines $c_1^2$ and $(K_{\mathcal Z_0} . c_1)$ of $\mathcal F_{k,t}^\vee \otimes \mathcal F_{\ell,t}$; indeed, $c_1^2 = -2n^2$, $(K.c_1)=0$. Riemann-Roch formula and $\chi(\mathcal F_{k,t}, \mathcal F_{k,t})=1$ implies 
		\[
			c_2(\mathcal F_{k,t}) = \frac{n-1}{2n}(c_1^2(\mathcal F_{k,t}) + n^2 + 1). \tag{see \cite[Lemma~5.3]{Hacking:ExceptionalVectorBundle}}
		\]
		By the splitting principle, $c_2(\mathcal F_{k,t}^\vee \otimes \mathcal F_{\ell,t})$ can be expressed in terms of the Chern classes of $\mathcal F_{k,t}$ and $\mathcal F_{\ell,t}$. More specifically, for any vector bundles $\mathcal E$ and $\mathcal G$ of ranks $e,g$ respectively,
		\[
			c_2(\mathcal E \otimes \mathcal G) = \frac{g(g-1)}{2} \mathop{c_1^2}(\mathcal E) + (eg-1) \mathop{c_1}(\mathcal E) . \mathop{c_1}(\mathcal G) + \frac{e(e-1)}{2} \mathop{c_1^2}(\mathcal G) + g \mathop{c_2}(\mathcal E) + e \mathop{c_2}(\mathcal G).
		\]
		In particular, we get $c_2(\mathcal F_{k,t}^\vee \otimes \mathcal F_{\ell,t})=0$. Riemann-Roch formula reads
		\[
			\chi(\mathcal F_{k,t},\mathcal F_{\ell,t}) = n^2 \chi(\mathcal O_{\mathcal X'_t}) + \frac 12 (c_1^2 - (K.c_1)) - c_2 = 0. \qedhere
		\]
	\end{proof}
	It remains to prove (\ref{eq: Orthogonality}) for $p=0,2$. To prove the case $p=0$, we need to take stability conditions into account.
	\begin{proposition}\label{prop: Stability}
		Let $\mathcal H / (0 \in \Delta)$ be a flat family of ample divisors in $\mathcal X' / (0 \in \Delta')$. For sufficiently large $m > 0$ and $t \neq 0$, $\mathcal F_{k,t}$ is slope stable with respect to $\mathcal A_t := K_{\mathcal X'_t} + (nm)\mathcal H_t$.
	\end{proposition}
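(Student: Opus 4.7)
The plan is to adapt Hacking's slope stability argument \cite[\textsection 5]{Hacking:ExceptionalVectorBundle} for a single Wahl degeneration to our multi-singularity setting on the partial M-resolution $p \colon \mathcal Z \to \mathcal X'$. Since $p$ is an isomorphism on each general fiber, it suffices to prove slope stability of $\mathcal F_{k,t}$ on $\mathcal Z_t$ with respect to $p^*\mathcal A_t$, which extends to a relative big-and-nef divisor $p^*\mathcal A = K_{\mathcal Z/\Delta'} + nm\, p^*\mathcal H$ on $\mathcal Z$. Each exceptional curve $A_i$ satisfies $(A_i \cdot p^*\mathcal A_0) = 0$, since $(K_{\mathcal Z_0}\cdot A_i)=0$ (crepant M-resolution of a T-singularity) and $p^*\mathcal H_0$ is trivial on $p$-exceptional curves.

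Assuming that a saturated destabilizing subsheaf $\mathcal G_t \subsetneq \mathcal F_{k,t}$ of rank $r \in \{1,\ldots,n-1\}$ exists for generic $t$, the first step is to take a flat Quot-scheme limit (after a finite base change) to obtain $\mathcal G \subset \mathcal F_k$ over $\mathcal Z$, whose central fiber restriction $\mathcal G_0 \subset \mathcal F_{k,0}$ still saturates the destabilizing slope inequality in the limit. I would then decompose $\mathcal G_0$ across the irreducible components of $\tilde{\mathcal Z}_0 = \tilde X_0 \cup W_1 \cup \ldots \cup W_d$, using the gluing description of $\mathcal F_{k,0}$ from Corollary~\ref{cor: Hacking V.B.}: the constituent summands are $\mathcal O_{\tilde X_0}(\tilde D_k)^{\oplus n}$ on $\tilde X_0$, the line bundle direct sums $\mathcal O_{W_i}(k_i(n_ia_i-1))^{\oplus n}$ for $i \neq k+1$, and the localized exceptional bundle $G_{W_{k+1}}$ on $W_{k+1}$, glued via $\mathcal O(1)^{\oplus n}$ along the curves $C$ and $C_i$.

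The heart of the argument is an asymptotic analysis in $m$: expanding $\mu_{p^*\mathcal A_0}(\mathcal G_0)$ as a polynomial in $m$, the leading $m^2$-coefficient forces the first Chern class of $\mathcal G_0$ on each component to be $(r/n)$ times that of $\mathcal F_{k,0}$ on that component. This pins down the restrictions on $\tilde X_0$ and on the $W_i$ ($i\neq k+1$) to the expected direct summands of rank $r$, so that the restriction to $W_{k+1}$ must be a saturated subsheaf of rank $r < n$ of $G_{W_{k+1}}$ whose boundary along $C$ is the summand $\mathcal O(1)^{\oplus r}$. Hacking's slope stability of the localized exceptional bundle $G_{W_{k+1}}$ on $W_{k+1}$ then gives the required contradiction.

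The main obstacle is carrying out the $m$-asymptotic rigorously across a reducible central fiber whose polarization $p^*\mathcal A_0$ is only big and nef, not ample. One must isolate, for each component, the contribution to $\mu_{p^*\mathcal A_0}(\mathcal G_0)$, verify that any non-proportional first Chern class on even one component yields an $O(m)$ destabilizing surplus, and then exploit the subleading terms in $m$ to force the compatibility of gluing data along $C, C_i$. Care is also needed in producing the flat extension $\mathcal G$ without introducing spurious torsion, but passing to the saturation of the closure of the generic destabilizer and shrinking $\Delta'$ appropriately should suffice.
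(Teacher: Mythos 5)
Your proposal attempts to re-run Hacking's degeneration argument from scratch on the reducible central fiber $\tilde X_0\cup W_1\cup\ldots$, whereas the paper takes a much shorter route: the sheaf $\mathcal F_k$ is constructed by applying Corollary~\ref{cor: Hacking V.B.} on the family $\mathcal Z$, so \cite[Proposition~4.4]{Hacking:ExceptionalVectorBundle} already gives slope stability of $\mathcal F_{k,t}$ with respect to any relatively \emph{ample} polarization of $\mathcal Z/(0\in\Delta')$; one then approximates the merely nef pullback $p^*\mathcal A$ by ample $\Q$-divisors $\mathcal H_i'$, passes to the limit to get \emph{semistability} with respect to $\mathcal A_t$, and upgrades to stability by showing $(\mathcal A_t\mathbin. c_1(\mathcal F_{k,t}))\equiv -a\pmod n$ is coprime to the rank $n$. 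Your write-up never engages with this last point, which is the entire reason the polarization is taken to be $K_{\mathcal X_t'}+(nm)\mathcal H_t$ rather than $\mathcal H_t$: the factor $nm$ kills the $\mathcal H$-contribution mod $n$ and $(K\mathbin. c_1)$ supplies the unit $-a$.

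More seriously, the step you flag as ``the main obstacle'' is not a technicality but the place where your argument breaks. The components $W_i$ are exceptional divisors of weighted blow-ups over points of $\mathcal Z_0$ lying over $P\in X$, so $p^*\mathcal H_0$ (and hence the $m$-leading part of $p^*\mathcal A_0$) restricts to the \emph{trivial} class on every $W_i$; consequently the componentwise slope comparison you propose is degenerate there, and the leading coefficient in $m$ cannot ``pin down'' the restriction of $\mathcal G_0$ to those components. (Also note that slopes are linear in the polarization, hence linear in $m$; there is no $m^2$-coefficient to exploit.) Hacking's own degeneration argument works precisely because he arranges an \emph{ample} polarization on the blown-up total space $\tilde{\mathcal X}$; with the big-and-nef $p^*\mathcal A$ you would at best recover semistability, which is exactly why the paper supplements the limiting argument with the degree--rank coprimality computation. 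As written, your proof has a genuine gap at this point.
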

	\begin{proof}
		If $d=1$, namely, $(P \in X) \simeq \frac{1}{n^2}(1,na-1)$, then $\mathcal F_{1,t}$ is slope stable with respect to $\mathcal H_t$\,(\cite[Proposition~4.4]{Hacking:ExceptionalVectorBundle}). For $d>1$, let $\mathcal H'$ be the pullback of $\mathcal H$ along $p \colon \mathcal Z \to \mathcal X'$. Then, $\mathcal H'$ is relatively nef over $(0 \in \Delta')$. Since $\op{Nef}(\mathcal Z/ (0 \in \Delta')) = \op{\overline{Ample}}(\mathcal Z/(0 \in \Delta'))$, we may consider a sequence $\{\mathcal H'_i\}_{i \in \Z_{>0}}$ of ample $\Q$-Cartier divisors which converges to $\mathcal H'$. Using \cite[Proposition~4.4]{Hacking:ExceptionalVectorBundle}, we see that $\mathcal F_{k,t}$ is slope stable with respect to $\mathcal H_i'$ for $t\neq 0$. Hence, $\mathcal F_{k,t}$ is slope stable with respect to all $\mathcal H_i'$. For any proper quotient $\mathcal F_{k,t} \twoheadrightarrow Q$, the stability conditions says $\mu_{\mathcal H'_{i,t}}(\mathcal F_{k,t}) < \mu_{\mathcal H'_{i,t}}(\mathcal Q)$. Taking limit $i \to \infty$ in both sides, we get $\mu_{\mathcal H'_t}(\mathcal F_{k,t}) \leq \mu_{\mathcal H'_t}(\mathcal Q)$, showing that $\mathcal F_{k,t}$ is slope semistable with respect to $\mathcal H'_t = \mathcal H_t$. This shows that $\mathcal F_{k,t}$ is semistable with respect to $\mathcal H_t$ for $t \neq 0$.
		
		Let $\mathcal A = K_{\mathcal X'} + (nm) \mathcal H$ for sufficiently large $m$, so that $\mathcal A_0$ is ample in $X=\mathcal X_0'$. The previous argument can be applied to $\mathcal A$, thus $\mathcal F_{k,t}$ is slope semistable with respect to $\mathcal A_t$. To prove the stability, it suffices to prove that $(c_1(\mathcal F_{k,t}) \mathbin . \mathcal A_t)$ is relatively prime to $n$. By Remark~\ref{rmk: comparing intersection theories}, we may compute the intersection in the central fiber via the specialization map. Since $c_1(\mathcal F_{k,t}) \xmapsto{\rm sp} n( D_0 + A_1 + \ldots + A_{k-1}) $, $(\mathcal A_t \mathbin. c_1(\mathcal F_{k,t}) ) = (
		K_{\mathcal Z_0} + nm \mathcal H'_0 \mathbin. nD_0)$. Here, $(\mathcal H'_0 \mathbin. nD_0) = (\mathcal H_t \mathbin. c_1(\mathcal F_{k,t})) \in \Z$, so
		\[
			(\mathcal A_t \mathbin. c_1(\mathcal F_{k,t})) = ( K_{\mathcal Z_0} \mathbin . nD_0 )\ (\mathrm{mod}\ n)
		\]
		Let $I \in \Z/n\Z$ be the intersection number $( K_{\mathcal Z_0} \mathbin . nD_0 )$ modulo $n$. Then, $I$ purely depends on the local geometry of $(A_1\cup\ldots\cup A_{k-1} \subset \mathcal Z_0) \to (P \in X)$, hence we may identify $\mathcal Z_0$ with the toric variety associated to the fan $\Sigma$ consisting of the rays $\{ \rho_k := \R_{>0} \cdot ( kn^2, 1-kna) : k=0,1,\ldots,d \}$ and the cones $\sigma_k := \op{Cone}(\rho_{k-1},\rho_k)$. In this setup, $A_k\,(k=1,\ldots,d-1)$ are the divisors associated to the rays $\rho_k$. Let $A_0 \subset (A_1\cup \ldots \cup A_{k-1} \subset \mathcal Z_0)$ be the local curve associated to the ray $\rho_0$. Then, $\gamma(D_0) = (\alpha_{11},0,\ldots,0) = \gamma(A_0)$, thus $D_0 - A_0$ is Cartier. This shows that 
		$(D_0 - A_0 \mathbin. K_{\mathcal Z_0} ) \in \Z$. Hence, $I = (nA_0 \mathbin. K_{\mathcal Z_0} ) = -a \ (\mathrm{mod}\ n)$\,(see \cite[p.1192]{Hacking:ExceptionalVectorBundle}). Since $(\mathcal A_t \mathbin . c_1(\mathcal F_{k,t})) = I\ (\mathrm{mod}\ n)$, $(\mathcal A_t \mathbin. c_1(\mathcal F_{k,t}))$ is relatively prime to $n$.
	\end{proof}
	During the proof, we have shown that $(\mathcal A_t \mathbin. c_1(\mathcal F_{k,t}) ) = (K_{\mathcal Z_0} + nm \mathcal H'_0 \mathbin. nD_0)$ is independent of $k$. This implies:
	\begin{corollary}
		For $k \neq \ell$, $\Hom(\mathcal F_{k,t},\, \mathcal F_{\ell,t} ) =0$.
	\end{corollary}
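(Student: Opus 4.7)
The plan is to combine the slope-stability of Proposition~\ref{prop: Stability} with the distinctness of the first Chern classes of the $\mathcal F_{k,t}$, reducing the vanishing of $\Hom$ to a linear-algebra fact about the exceptional curves $A_i$.

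At the end of the proof of Proposition~\ref{prop: Stability} we observed that the intersection number $(\mathcal A_t \mathbin. c_1(\mathcal F_{k,t}))$ equals $(K_{\mathcal Z_0} + nm\mathcal H'_0 \mathbin. nD_0)$, independently of $k$. Since every $\mathcal F_{k,t}$ has rank $n$, the bundles $\mathcal F_{1,t},\ldots,\mathcal F_{d,t}$ share one common $\mathcal A_t$-slope $\mu$. Each $\mathcal F_{k,t}$ is $\mathcal A_t$-slope stable. A standard consequence of slope-stability then forces any nonzero morphism $\varphi\colon \mathcal F_{k,t}\to\mathcal F_{\ell,t}$ to be an isomorphism: $\det\varphi$ is a nonzero section of the line bundle $(\det \mathcal F_{k,t})^{-1}\otimes\det\mathcal F_{\ell,t}$ of $\mathcal A_t$-degree zero, whose zero locus is an effective divisor of $\mathcal A_t$-degree zero and therefore empty because $\mathcal A_t$ is ample; hence $\det\varphi$ is nowhere vanishing and $\varphi$ is an isomorphism.

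It therefore suffices to show $\mathcal F_{k,t}\not\simeq\mathcal F_{\ell,t}$ for $k<\ell$. Suppose otherwise. Then $c_1(\mathcal F_{k,t}) = c_1(\mathcal F_{\ell,t})$ in $H^2(\mathcal X'_t;\Q) = H^2(\mathcal Z_t;\Q)$. Applying the specialization map to $\mathcal Z_0$, which is linear, and using the identity $\op{sp}(c_1(\mathcal F_{k,t})) = n(D_0 + A_1 + \cdots + A_{k-1})$ recorded in the proof of Proposition~\ref{prop: Stability}, we obtain
\[
	n(A_k + A_{k+1} + \cdots + A_{\ell-1}) = 0 \quad\text{in } H^2(\mathcal Z_0;\Q).
\]
By the intersection data of Proposition~\ref{prop: Stability}, however, the matrix $\bigl((A_i \mathbin. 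A_j)\bigr)_{1\le i,j\le d-1}$ is tridiagonal with $-2/n^2$ on the diagonal and $1/n^2$ on the off-diagonal, so it equals $-\tfrac{1}{n^2}$ times the Cartan matrix of type $A_{d-1}$; in particular it is negative definite and nonsingular, making $[A_1],\ldots,[A_{d-1}]$ $\Q$-linearly independent. This contradicts the displayed identity.

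I expect no serious obstacle; the two ingredients (equality of $\mathcal A_t$-slopes and negative-definiteness of the intersection form on $\{A_i\}$) are already assembled in the preceding proofs, and the passage from ``common slope, common rank, both stable'' to ``nonzero morphism is an isomorphism'' is standard.
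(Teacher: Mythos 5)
Your argument is correct and follows the paper's own (very terse) route: the corollary is deduced from the equality of the $\mathcal A_t$-degrees established at the end of Proposition~\ref{prop: Stability} together with slope stability, via the standard fact that a nonzero map between stable bundles of equal slope and rank is an isomorphism. You also supply the step the paper leaves implicit---that the $\mathcal F_{k,t}$ are pairwise non-isomorphic, which you correctly obtain from $\op{sp}(c_1(\mathcal F_{\ell,t}))-\op{sp}(c_1(\mathcal F_{k,t}))=n(A_k+\cdots+A_{\ell-1})\neq 0$ and the negative definiteness of the intersection matrix of the exceptional curves $A_i$---and this step is genuinely needed for the two-sided vanishing.
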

	\begin{proposition}
		For $k \neq \ell$, $\Ext^2(\mathcal F_{k,t},\, \mathcal F_{\ell,t} ) = 0$.
	\end{proposition}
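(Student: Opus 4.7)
The plan is to reduce everything to a \emph{Hom} vanishing by Serre duality and then run the same slope-stability argument as in the preceding corollary, with $\mathcal F_{k,t}$ replaced by its twist $\mathcal F_{k,t} \otimes \omega_{\mathcal X'_t}$. Since $\mathcal X'_t$ is a smooth projective surface, Serre duality gives
\[
\Ext^2(\mathcal F_{k,t},\, \mathcal F_{\ell,t}) \;\simeq\; \Hom\bigl(\mathcal F_{\ell,t},\, \mathcal F_{k,t} \otimes \omega_{\mathcal X'_t}\bigr)^\vee,
\]
so it is enough to prove the right-hand Hom vanishes. Twisting by a line bundle preserves slope-stability and rank, so by Proposition~\ref{prop: Stability} both $\mathcal F_{\ell,t}$ and $\mathcal F_{k,t}\otimes\omega_{\mathcal X'_t}$ are slope-stable of rank $n$ with respect to $\mathcal A_t$.

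The next step is the standard dichotomy for a nonzero morphism $\varphi$ between two slope-stable bundles of the same rank $n$ on a smooth surface: either $\mu_{\mathcal A_t}(\mathcal F_{\ell,t}) < \mu_{\mathcal A_t}(\mathcal F_{k,t}\otimes\omega_{\mathcal X'_t})$, in which case $\varphi$ is an injection with (necessarily effective) divisorial cokernel $T$, or the slopes agree and $\varphi$ is an isomorphism. Using $\mu_{\mathcal A_t}(\mathcal F_{k,t}\otimes\omega_{\mathcal X'_t}) = \mu_{\mathcal A_t}(\mathcal F_{k,t}) + (K_{\mathcal X'_t}.\mathcal A_t)$ and the rank-$n$ equality $\mu_{\mathcal A_t}(\mathcal F_{k,t}) = \mu_{\mathcal A_t}(\mathcal F_{\ell,t})$ from the proof of the preceding corollary, the sign of $(K_{\mathcal X'_t}.\mathcal A_t)$ controls which case we are in.

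To rule out the isomorphism case, the plan is to specialize the forced Chern-class identity $c_1(\mathcal F_{\ell,t}) = c_1(\mathcal F_{k,t}) + n K_{\mathcal X'_t}$ to $\mathcal Z_0$. Since $\mathcal Z_0$ has only Wahl singularities, Remark~\ref{rmk: comparing intersection theories} yields an intersection-preserving isomorphism $H_2(\mathcal X'_t;\Q) \xrightarrow{\sim} H_2(\mathcal Z_0;\Q)$, and the identity becomes (say for $\ell > k$)
\[
A_k + A_{k+1} + \cdots + A_{\ell-1} \;=\; K_{\mathcal Z_0} \qquad \text{in } H_2(\mathcal Z_0;\Q).
\]
Because the M-resolution $\mathcal Z_0 \to X$ is crepant, $K_{\mathcal Z_0}$ is a pullback and satisfies $(K_{\mathcal Z_0}.A_j) = 0$ for all $j$. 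On the other hand, the local toric intersection numbers $(A_i^2) = -\tfrac{2}{n^2}$ and $(A_i.A_{i\pm1}) = \tfrac{1}{n^2}$ recalled in Proposition~\ref{prop: Stability} give $\bigl((A_k + \cdots + A_{\ell-1}).A_k\bigr) \in \{-2/n^2, -1/n^2\} \ne 0$, a contradiction.

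The step I expect to be the main obstacle is the ``wrong-sign'' slope case $(K_{\mathcal X'_t}.\mathcal A_t) > 0$, in which a nonzero map need not be an isomorphism. The plan here is to exploit the short exact sequence $0 \to \mathcal F_{\ell,t} \to \mathcal F_{k,t}\otimes\omega_{\mathcal X'_t} \to T \to 0$ produced by saturation: the class of the divisorial part of $T$ specializes to the class $n K_{\mathcal Z_0} - n(A_k + \cdots + A_{\ell-1})$ on $\mathcal Z_0$, and effectivity of this class can be obstructed by intersecting with each $A_j$ (again using crepancy to kill the $K_{\mathcal Z_0}$ contribution and the toric intersection data to produce a strictly negative number), upgraded if necessary by a small perturbation of the polarization $\mathcal A_t$ to reduce to the equal-slope case already handled above.
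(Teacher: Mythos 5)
Your opening move (Serre duality, reducing $\Ext^2(\mathcal F_{k,t},\mathcal F_{\ell,t})$ to a $\Hom$ into $\mathcal F_{k,t}\otimes\omega_{\mathcal X'_t}$) is fine, and your treatment of the equal-slope case via the forced identity $c_1(\mathcal F_{\ell,t})=c_1(\mathcal F_{k,t})+nK$ and its specialization to $\mathcal Z_0$ is a correct computation. But the argument has a genuine gap exactly where you anticipate one: the case $(K_{\mathcal X'_t}.\mathcal A_t)>0$ is not handled, and it is the generic case. The theorem makes no assumption that $\mathcal X'_t$ is del Pezzo; such smoothings include surfaces of general type with $p_g=q=0$, and moreover $\mathcal A_t=K_{\mathcal X'_t}+(nm)\mathcal H_t$ with $m\gg0$, so $(K_{\mathcal X'_t}.\mathcal A_t)$ is positive whenever $(K_{\mathcal X'_t}.\mathcal H_t)>0$. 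In that regime stability gives no obstruction to a nonzero map $\mathcal F_{\ell,t}\to\mathcal F_{k,t}\otimes\omega_{\mathcal X'_t}$. Your proposed repair does not work: a nonzero map between stable bundles of the same rank with $\mu(\text{source})<\mu(\text{target})$ need not be injective (its image can have any intermediate rank and slope), and even granting injectivity, effectivity of the class $nK_{\mathcal Z_0}-n(A_k+\cdots+A_{\ell-1})$ cannot be refuted by intersecting with the $A_j$: effective divisors may meet curves negatively, the relevant intersection numbers here are in fact $0$ or $+1/n$ rather than negative, and when $K$ is big this class is effective for trivial reasons. Perturbing the polarization also cannot help, since $\Hom$-vanishing is independent of the polarization and $(K.\mathcal A_t)$ cannot be perturbed to zero when it is robustly positive.

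The paper takes an entirely different route that avoids stability for the $\Ext^2$ vanishing. It forms the sheaf $\mathcal G=\bigl(p_*(\mathcal F_k\otimes\mathcal F_\ell^\vee\otimes\omega_{\mathcal Z/\Delta})\bigr)^{\vee\vee}$ on the whole family, observes that on the central fiber both $\mathcal F_{k,0}$ and $\mathcal F_{\ell,0}$ push forward to the \emph{same} sheaf $\mathcal O_X(p_{0*}D_0)^{\oplus n}$ away from $P$, so that $\mathcal G_0$ embeds into $\omega_X^{\oplus n^2}$ and $h^0(\mathcal G_0)\le n^2h^0(\omega_X)=0$ by the hypothesis $H^2(\mathcal O_X)=0$; it then propagates this to the general fiber by a careful semicontinuity argument (via the derived projection formula and a Tor spectral sequence) needed because $\mathcal G$ is not flat over the base. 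The crucial input your proof is missing is precisely this use of $H^2(\mathcal O_X)=0$ on the degenerate fiber; without some such global cohomological input the statement cannot follow from slope considerations alone.
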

	\begin{proof}
		By Serre duality, it suffices to prove that $H^0( \mathcal F_{k,t} \otimes \mathcal F_{\ell,t}^\vee \otimes \omega_{\mathcal X'_t} ) = 0$. Let $\mathcal G$ be the reflexive hull of $p_* \bigl( \mathcal F_k \otimes \mathcal F_\ell^\vee \otimes \omega_{\mathcal Z / \Delta} \bigr)$\,(recall: $p \colon \mathcal Z \to \mathcal X'$ is the morphism defined in Corollary~\ref{cor: simul M-res}). For general $t$, $\mathcal G_t = \mathcal F_{k,t} \otimes \mathcal F_{\ell,t}^\vee \otimes \omega_{\mathcal X'_t}$\,(cf. \cite[Theorem~3.6.1]{Conrad:GrothendieckDuality}), hence our claim is $H^0(\mathcal G_t)=0$. We divide the proof into several steps.
		\begin{enumerate}[label={\it\indent Step~\arabic{enumi}.}, fullwidth]
			\item We first claim that $H^0(\mathcal G_0) =0$. Let $p_0 \colon \mathcal Z_0 \to X$ be the morphism between central fibers. By Proposition~\ref{prop: versal simul M-res}(b), $p_0$ induces the isomorphism $\mathcal Z_0 \setminus ( A_1 \cup \ldots A_{k-1}) \xto\sim X \setminus \{P\}$. Let us denote $X \setminus\{P\}$ by $X^\circ$. By the construction of $\mathcal F_k$, we have $(p_0{}_* \mathcal F_{k,0})^{\vee\vee} = \mathcal O_X( p_0{}_* D_0)^{\oplus n}$ for all $k=1,\ldots,d$. Hence,
		\begin{align*}
			\mathcal G\big\vert_{X^\circ} & \simeq \bigl( \mathcal O_X( p_0{}_* D_0 ) )^{\oplus n} \otimes \mathcal O_X( -p_0{}_* D_0 ) )^{\oplus n} \bigr)^{\vee\vee} \big\vert_{X^\circ} \otimes \omega_{X^\circ} \\
			& \simeq \omega_{X^\circ}^{\oplus n^2}.
		\end{align*}
		Since $\mathcal G$ is reflexive, it satisfies the condision $S_2$ of Serre, hence $\mathcal G_0$ satisfies the conditions $S_1$\,(torsion-freeness). Thus, $\mathcal G_0 \hookrightarrow (\mathcal G_0)^{\vee\vee} = \omega_X^{\oplus n^2}$, and $h^0(\mathcal G_0) \leq n^2\,h^0(\omega_X) = 0$.
			\item It is natural to expect that $H^0(\mathcal G_0)=0$ implies $H^0(\mathcal G_t)=0$. However, $\mathcal G$ is not flat over $\Delta'$, so it does not automatically follows from the semicontinuity. Let $\varphi \colon \mathcal X' \to \Delta'$ be the deformation morphism. Then we may define an upper-semicontinuous function
			\begin{equation}\label{eq: local rank function}
				h \colon \Delta' \to \Z_{>0},\quad t \mapsto \dim_\C \varphi_* \mathcal G \otimes \mathcal O_t.
			\end{equation}
			The derived projection formula reads
			\[
				\mathbf{R}\varphi_* \mathcal G \Lotimes \mathcal O_t \simeq \mathbf{R}\varphi_* \bigl( \mathcal G \Lotimes \mathcal O_{\mathcal X'_t} \bigr).
			\]
			Note that $\varphi$ is flat, so we do not need to derived $\varphi^* \mathcal O_t = \mathcal O_{\mathcal X'_t}$. The object $\mathcal G \Lotimes \mathcal O_{\mathcal X_t'}$ is isomorphic to the two term complex $\mathcal G \otimes \mathcal O_\mathcal Z(- \mathcal X'_t) \to \mathcal G$. Let $z \in \mathcal O_{\Delta'}$ be a function that vanishes only at $t$ with order $1$. Then, the map $\mathcal G \otimes \mathcal O_\mathcal Z(-\mathcal X'_t) \to \mathcal G$ is given by the multiplication by $s := \varphi^*(z)$, hence its kernel consists of the sections whose supports are contained in $(s=0) = \mathcal X'_t$. Since $\mathcal G$ is torsion-free, there are no such sections. This proves that $\mathcal G \Lotimes \mathcal O_t \simeq \mathcal G_t$. Thus,
			\begin{equation}\label{eq: aux: isomorphic derived functors}
				\mathbf{R}\varphi_* \mathcal G \Lotimes \mathcal O_t \simeq \mathbf{R}\varphi_* \mathcal G_t.
			\end{equation}
			\item For a complex $(E^\bullet,\, d^p \colon E^p \to E^{p+1})$, let us denote the $p$th cohomology sheaf by $\mathcal H^p(\mathcal E^\bullet) = \ker d^p / \image d^{p-1}$. The $\mathcal H^0$ in the right hand side of (\ref{eq: aux: isomorphic derived functors}) is $H^0(\mathcal G_t) \otimes \mathcal O_t$, hence it suffices to prove $\mathcal H^0(\mathbf{R}\varphi_* \mathcal G \Lotimes \mathcal O_t)=0$. We use the spectral sequence for the Tor sheaves $\varTor_{-p}(\mathcal A^\bullet, \mathcal B^\bullet) = \mathcal H^p(\mathcal A^\bullet \Lotimes \mathcal B^\bullet)$:
			\[
				E_2^{p,q} := \varTor_{-p}(\mathbf{R}^q \varphi_* \mathcal G,\, \mathcal O_t) \Rightarrow H^{p+q} := \varTor_{-p-q}(\mathbf{R}\varphi_* \mathcal G,\, \mathcal O_t). \tag{\cite[\textsection 3.3]{Huybrechts:FourierMukai}}
			\]
			Since $\mathcal O_t$ admits a locally free resolution $\mathcal O_{\Delta'}(-t) \to \mathcal O_{\Delta'}$, $E_2^{p,q}=0$ unless $p\in \{0,-1\}$. Consequently,
			\[
				\mathrm{F}^p H^n / \mathrm{F}^{p+1} H^n \simeq E_\infty^{p,n-p} = E_2^{p,n-p}.
			\]
			The filtration structure induces the short exact sequence
			\[
				0 \to E_2^{-1,1} \to H^0 \to E_2^{0,0} \to 0,
			\]
			which can be regarded as an exact sequence of vector spaces over $\C(\{t\}) \simeq \C$. Hence, $H^0 \simeq E_2^{0,0} \oplus E_2^{-1,1}$. We see that $E_2^{0,0} = \mathcal H^0( \varphi_* \mathcal G \Lotimes \mathcal O_t) = \varphi_* \mathcal G \otimes \mathcal O_t$, and $E_2^{-1,1} = \varTor_1( \mathbf{R}^1\varphi_* \mathcal G,\mathcal O_t)$. The last one can be identified with $\mathcal T \otimes \mathcal O_t$, where $\mathcal T$ is the torsion subsheaf of $ \mathbf{R}^1\varphi_* \mathcal G $. Consequently, we get an isomoprhism
			\[
				H^0 \simeq E_2^{0,0} \oplus E_2^{-1,1} \simeq (\varphi_* \mathcal G \oplus \mathcal T) \otimes \mathcal O_t.
			\]
			The arguments in {\it Step 2} are also valid for $t=0$, so (\ref{eq: aux: isomorphic derived functors}) holds for $t=0$. Looking at $\mathcal H^0$ of both sides, we get $\varphi_*\mathcal G \otimes \mathcal O_{0} \simeq H^0(\mathcal G_0) \otimes \mathcal O_0$, and it vanishes by {\it Step 1}. By semicontinuity of (\ref{eq: local rank function}), $\varphi_* \mathcal G \otimes \mathcal O_t=0$ for general $t$. Since $\mathcal T$ is torsion sheaf, $\mathcal T \otimes \mathcal O_t=0$ for general $t$. It follows that $H^0 = \mathcal H^0( \mathbf{R}\varphi_* \mathcal G \Lotimes \mathcal O_t)=0$ for general $t$. \qedhere
		\end{enumerate}
	\end{proof}
	\subsection{Example}
		Based on the arguments in \cite[\textsection 2.5.1]{Hacking:CompactModuliBarcelonaNote}, we demonstrate the relation between three block collections and $\Q$-Gorenstein degenerations of del Pezzo surfaces. For simplicity, we present a particular toric surface as an explicit example, however, the argument can be generalized to every other toric surfaces listed in \cite[Theorem~4.1]{HackingProkorov:DegenerationOfDelPezzo}.
		\begin{figure}[h]
		    \centering
			\begin{tikzpicture}
                \pgfmathsetmacro{\unit}{1.25}
				\coordinate (E1) at (8*\unit,-1.75*\unit);
				\coordinate (E2) at (-.75*\unit,0.66*\unit);
				\coordinate (E4) at (-.75*\unit,-1.75*\unit);
				\draw (0,0) -- (E1);
				\draw (0,0) -- (E2);
				\draw (0,0) -- (E4);
				\draw[dotted] (E1) -- (E2) -- (E4) -- cycle;
				
				\coordinate (E3) at ($(E2)!0.5!(E4)$);
				
				\draw[dashed] (0,0) -- (E3);
				\foreach \n in {5,6,7,8,9}{
				    \pgfmathtruncatemacro{\m}{\n-4}
				    \pgfmathsetmacro{\int}{1/6*\m}
				    \coordinate (E\n) at ($(E4)!\int!(E1)$);
				    \draw[dashed] (0,0) -- (E\n);
				}
				\draw node[anchor=north west] at (E1) {$\rho_1$};
				\draw node[anchor=south east] at (E2) {$\rho_2$};
				\draw node[anchor=north east] at (E4) {$\rho_4$};
				\foreach \n in {3,5,6,7,8,9}{
					\ifthenelse{\n = 3}{
						\draw node[anchor=east] at (E\n) {$\rho_\n$};
					}{
						\draw node[anchor=north] at (E\n) {$\rho_\n$};						
					}
				}
			\end{tikzpicture}
			\caption{Toric fan of $X$ (solid lines) and $\mathcal Z_0$ (solid and dashed lines)}\label{fig: eg: Toric fan}
		\end{figure}
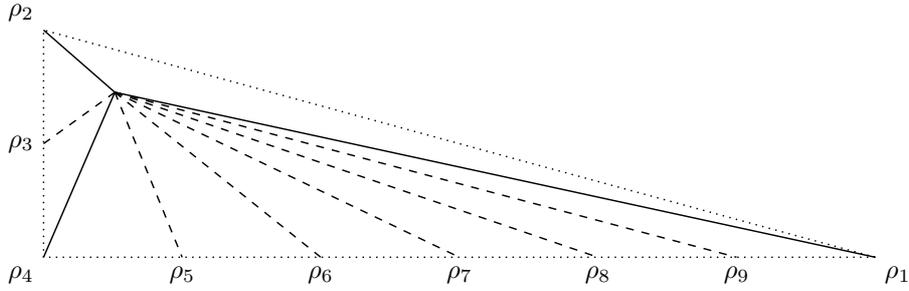
		Let $X$ be the complete toric surface whose fan is spanned by three rays, $\rho_1 = (49,-9)$, $\rho_2 = (-5,1)$, and $\rho_4 = (-5,-9)$. These are the solid line in Figure~\ref{fig: eg: Toric fan}. According to \cite{HackingProkorov:DegenerationOfDelPezzo}, $X$ is the fake weighted projective plane corresponding to the solution $(r_1,r_2,r_3)=(2,5,9)$ of the Markov type equation
		\[
			r_1^2 + 2r_2^2 + 6r_3^2 = 6 r_1r_2r_3.
		\]
		A one-parameter $\Q$-Gorenstein smoothing $\mathcal X / (0 \in \Delta)$ has the general fiber $S := \mathcal X_t$ a del Pezzo surface of degree $3$. Since $H^2(\mathcal T_X)=0$\,(\cite[Proposition~3.1]{HackingProkorov:DegenerationOfDelPezzo}), there exists a base change $\mathcal X' / (0 \in \Delta')$ such that  Corollary~\ref{cor: simul M-res} can be applied simultaneously to all the singular points of $X$. So, there is a birational morphism $\mathcal Z / (0 \in \Delta') \to \mathcal X' / (0 \in \Delta')$ such that $\mathcal Z_0$ is the fan in Figure~\ref{fig: eg: Toric fan}, and $\mathcal Z_t \simeq \mathcal X'_t$ for general $t$. Let $P_i$ be the torus fixed point corresponding to the cone $\R_{\geq0} \cdot \rho_i + \R_{\geq0} \cdot \rho_{i+1}$ (regarding $\rho_{10}$ as $\rho_1$). Then, $(P_1 \in \mathcal Z_0) \simeq \frac{1}{2^2}(1,1)$, $(P_2,P_3 \in \mathcal Z_0) \simeq \frac{1}{5^2}(1,4)$, and $(P_4,\ldots,P_9 \in \mathcal Z_0) \simeq \frac{1}{9^2}(1,17)$. Since $\gcd(2,9)=1$, we may choose an integer $c$ such that $c \cdot \Div \rho_1$ satisfies the conditions (1--3) of Theorem~\ref{thm: Main thm}. For instance, $D_1 := 9^2 \cdot \Div \rho_1$ satisfies the conditions (1) with respect to $P_1$, and (3) with respect to other $P_k$'s. This produces an exceptional vector bundle $E_{1,t}$ of rank $2$ on $S$. For $k=2,\ldots,9$, let
		\[
			D_k = D_{k-1} + \Div \rho_k.
		\]
		It can be shown that $D_i$ satisfies the condition (1) with respect to $P_k$, (2) with respect to $P_1,\ldots,P_{k-1}$, and (3) with respect to the remaining $P_k$'s. This yields an exceptional vector bundle $E_{k,t}$ over $S$. Let $\mathcal O^r(\alpha)$ be an exceptional vector bundle on $S$ of rank $r$ and $c_1 = \alpha$ (indeed, exceptional vector bundles on $S$ are determined by $r$ and $c_1$\,\cite[Corollary~2.5]{Gorodentsev:MovingAnticacnonical}). After a suitable choice of $L \in \Pic S$, one finds that these bundles form a three block collection
		\begin{align*}
			\D(S) &= \langle\, \mathcal E_{1,t} \otimes \mathcal O(L),\, \ldots,\, \mathcal E_{9,t} \otimes \mathcal O(L) \, \rangle \\
			&= \langle\, \mathcal O^2(2K+\ell),\quad \mathcal O^5(2\ell) ,\, \mathcal O^5(K+3\ell),\quad \mathcal O^9(K+5\ell-e_1),\,\ldots,\, \mathcal O^9(K+5\ell-e_6)\, \rangle
		\end{align*}
		corresponding to the solution $(r_1,r_2,r_3)=(2,5,9)$ to the Markov type equation. Here, $e_1,\ldots,e_6 \subset S$ is a disjoint set of $(-1)$-curves and $\ell \subset S$ is the pullback of a general line along the contraction $S \to \P^2$ of $e_1,\ldots,e_6$.
	\subsection*{Acknowledgments}{\ } The author is grateful to Mohammad Akhtar, Seung-Jo Jung, Kyoung-Seog Lee, Yongnam Lee, Dongsoo Shin, and Giancarlo Urz\'ua for valuable discussions from which he took a lot of benefit. A part of this work had been done when the author was a member of Universit\"at Bayreuth -- Lehrstuhl Mathematik VIII.
	
	This work was supported by ERC Advanced Grant no. 340258 TADMICAMT, and by KIAS Individual Grant no. MG074601 at Korea Institute for Advanced Study.
\bibliography{arXiv_OrthogonalCollection}
\end{document}